\newtheorem{theorem}{Theorem}[section]
\newtheorem{proposition}[theorem]{Proposition}
\newtheorem{lemma}[theorem]{Lemma}
\newtheorem{corollary}[theorem]{Corollary}
\theoremstyle{definition}
\newtheorem{example}[theorem]{Example}
\newtheorem{definition}[theorem]{Definition}
\newtheorem{claim}[theorem]{Claim}
\newcommand{\N}{\mathbb{N}}
\newcommand{\Z}{\mathbb{Z}}
\def\b{\beta}
\def\endo#1#2#3#4{\left[\begin{matrix}
                           #1 & #2 \cr
                           #3 & #4
                        \end{matrix}\right]}
\begin{document}

\author[P.V. Danchev]{Peter V. Danchev}
\address{Institute of Mathematics and Informatics, Bulgarian Academy of Sciences, 1113 Sofia, Bulgaria}
\email{danchev@math.bas.bg; pvdanchev@yahoo.com}
\author[P.W. Keef]{Patrick W. Keef}
\address{Department of Mathematics, Whitman College, Walla Walla, WA 99362, USA}
\email{keef@whitman.edu}

\title[Abelian $p$-Groups with Minimal Characteristic Inertia] {Abelian $p$-Groups with \\ Minimal Characteristic Inertia}
\keywords{Abelian $p$-groups, Fully inert and characteristically inert subgroups, Minimal full inertia, Minimal characteristic inertia, Fully transitive and transitive groups}
\subjclass[2010]{20K10}

\maketitle

\begin{abstract} For Abelian $p$-groups, Goldsmith, Salce, et al., introduced the notion of {\it minimal full inertia}. In parallel to this, we define the concept of {\it minimal characteristic inertia} and explore those $p$-primary Abelian groups having minimal characteristic inertia. We establish the surprising result that, for each Abelian $p$-group $A$, the square $A\oplus A$ has the minimal characteristic inertia if, and only if, it has the minimal full inertia. We also obtain some other relationships between these two properties. Specifically, we
exhibit groups which do not have neither of the properties, as well as we show via a concrete complicated construction that, for any prime $p$, there is a $p$-group possessing the minimal characteristic inertia which does not possess the minimal full inertia.
\end{abstract}

\vskip2.0pc

\section{Introduction and Conventions}

All groups considered will be Abelian $p$-primary for some arbitrary but a fixed prime $p$. Our notation and terminology will generally agree with the classical books \cite{F1}, \cite{F2}, \cite{G} and \cite{Kap}. If $X$ is a set, $|X|$ will denote its cardinality, and if $x$ is an element of a group, $|x|$ will denote its $p$-height.

\medskip
	
As part of a more general investigation into the concept of {\it algebraic entropy} (see, for example, \cite{GS:entropy}), Goldsmith,  Salce, et al., introduced the important concept of {\it minimal full inertia} (see \cite{GSZ} and \cite{GS}). In particular, they proved some significant results like these: {\it the direct sums of cyclic groups have minimal full inertia} (see \cite {GS} Corollary 3.3) as well as that {\it the class of groups which have minimal full inertia is not closed under taking finite direct sums} (see \cite {GS} , Example 3.6). They also  construct numerous groups that either do or do not have minimal full inertia (see \cite {GS}, Proposition 3.7, Example 3.8 and Proposition 4.1, respectively).

\medskip

Furthermore, the fact that direct sums of cyclic groups have minimal full inertia was greatly generalized in \cite{K} where a class of groups properly containing the totally projective groups (just called ``countably totally projective") was also shown to have this property.

These ideas utilize ideas that go back at least as far as \cite{K}. The collection of endomorphisms of a group are used to define when a subgroup is fully invariant. If the collection of endomorphisms is replaced by the set of automorphisms of the group, the parallel notion is that of a characteristic subgroup.  The central theme of this paper is to investigate how this change of perspectives affects the property of when a group has minimal inertia.

\medskip

We turn to formally defining the above terms.

\begin{definition} Two subgroups of a given group are called {\it commensurable}, provided their intersection has finite index in each of them.
\end{definition}

Throughout the text, if $X$ is a  subgroup of a group $G$ and $\phi$ is an endomorphism of $G$, then let $\hat \phi(X)=(\phi(X)+X)/X$. So, $X$ is {\it characteristic} or {\it fully invariant} in $G$ if, and only if, $\hat \phi(X)=0$ for every automorphism or endomorphism, respectively, of $G$. It is obvious and well-known that fully invariant subgroups are always characteristic, while the converse implication fails. It is then of some interest to consider those groups whose characteristic subgroups are fully invariant and, even more generally, when all characteristic subgroups are commensurable with fully invariant subgroups. Thus the objective of our work is to examine exactly such groups. To that goal, the following notion appeared in \cite{CDG}.

\begin{definition}
A subgroup $X$ of $G$  is {\it characteristically inert} (or {\it fully inert}) if $\hat\phi(X)$ is finite for every automorphism (respectively, endomorphism) of $G$.
\end{definition}

It is easy to check that, if the subgroup $X$ is commensurable with a characteristic or fully invariant subgroup of the whole group $G$, then it is too characteristically or fully inert, respectively.

Continuing our parallel between characteristic versus fully invariant and characteristically versus fully inert, we are led to the following idea.

\begin{definition}
The group  $G$ has {\it minimal characteristic inertia} (or {\it minimal full inertia}) if every characteristically inert subgroup (respectively, fully inert subgroup) is commensurable with some characteristic (respectively, fully invariant) subgroup of $G$.
\end{definition}

Our further work is organized thus: In the next section, we formulate and prove our main assertions on groups with minimal characteristic inertia and some closely related properties (see, e.g., Theorems~\ref{mincharfull} and \ref{big}). In the subsequent section, we construct a series of examples illustrating these ideas (see, e.g., Examples~\ref{noone}, \ref{minchar} and \ref{transnonfull}).  We finish our exposition by stating two unresolved problems of some interest and importance which, hopefully, will stimulate a further investigation of the subject (see Problems 1 and 2).

\section{Statements and Proofs}

We begin with a simple but useful observation.

\begin{lemma}\label{bound}
Suppose $X$ is a subgroup of $G$ and $\phi_1, \dots, \phi_k$ are endomorphisms of $G$ such that $\hat\phi_i(X)$ is finite for $i=1,\dots, k$. If $\gamma = \phi_1+\cdots \phi_k$, then $\hat \gamma (X)$ is finite and its cardinality satisfies the inequality
$$          |\hat \gamma(X)|\leq|\hat \phi_1(X)| \cdots |\hat\phi_k(X)|.
$$
\end{lemma}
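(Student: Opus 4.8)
The plan is to pass to the quotient group $G/X$ and interpret everything through the canonical projection $\pi\colon G\to G/X$. For any endomorphism $\phi$ of $G$, the subgroup $\hat\phi(X)=(\phi(X)+X)/X$ is precisely the image $\pi(\phi(X))$, since $\phi(X)$ is a subgroup of $G$ (as $\phi$ is an endomorphism and $X$ a subgroup). Thus by hypothesis each $\hat\phi_i(X)=\pi(\phi_i(X))$ is a finite subgroup of $G/X$, and the whole problem becomes an estimate about sums of finite subgroups inside $G/X$.

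Next I would establish the key containment $\hat\gamma(X)\subseteq \hat\phi_1(X)+\cdots+\hat\phi_k(X)$. Because $\gamma=\phi_1+\cdots+\phi_k$ is again an endomorphism, for every $x\in X$ one has $\gamma(x)=\phi_1(x)+\cdots+\phi_k(x)$, and applying $\pi$ gives $\pi(\gamma(x))=\pi(\phi_1(x))+\cdots+\pi(\phi_k(x))$, which lies in the sum $\pi(\phi_1(X))+\cdots+\pi(\phi_k(X))$. Since $x$ was arbitrary and both sides are subgroups of $G/X$, this yields the asserted inclusion of $\hat\gamma(X)=\pi(\gamma(X))$ into the sum of the $\hat\phi_i(X)$.

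Finally I would bound the cardinality. The addition map $\hat\phi_1(X)\times\cdots\times\hat\phi_k(X)\to \hat\phi_1(X)+\cdots+\hat\phi_k(X)$, sending $(y_1,\dots,y_k)$ to $y_1+\cdots+y_k$, is a surjective homomorphism, so the sum is finite and $|\hat\phi_1(X)+\cdots+\hat\phi_k(X)|\le |\hat\phi_1(X)|\cdots|\hat\phi_k(X)|$. Combining this with the inclusion from the previous step shows that $\hat\gamma(X)$ is finite and satisfies the stated inequality.

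There is no serious obstacle here; the only points requiring care are purely formal: confirming that $\hat\phi(X)$ really is the subgroup $\pi(\phi(X))$, so that the sum is genuinely taken inside $G/X$, and checking that the containment runs in the direction $\hat\gamma(X)\subseteq\sum_i\hat\phi_i(X)$ rather than the reverse. Once these are pinned down, the cardinality estimate is immediate from the surjectivity of the addition map; alternatively, an easy induction on $k$ reduces everything to the two-summand case, should one prefer to avoid handling the general sum directly.
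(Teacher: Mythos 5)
Your proposal is correct and follows essentially the same route as the paper: the paper's map $\sigma\colon \hat\phi_1(X)\oplus\cdots\oplus\hat\phi_k(X)\to G/X$ is exactly your addition map, its image is your sum $\hat\phi_1(X)+\cdots+\hat\phi_k(X)$, and the containment $\hat\gamma(X)\subseteq\sigma(Z)$ together with $|Z|\geq|\sigma(Z)|$ gives the same cardinality bound. No gaps.
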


\begin{proof} Simple addition gives a homomorphism
$$\sigma: Z:=\hat \phi_1(X)\oplus \cdots \oplus \hat \phi_k(X)\to G/X.
$$
The condition  $\gamma = \phi_1+\cdots \phi_k$ easily implies that $\hat \gamma(X)\subseteq \sigma(Z)$.  Therefore,
$$
            |\hat \phi_1(X)| \cdots |\hat\phi_k(X)|=\vert Z\vert \geq \vert \sigma(Z)\vert \geq \vert \hat \gamma(X)\vert,
$$
as required.
\end{proof}

\begin{proposition}\label{next} Suppose $G$ has the property that every endomorphism of $G$ is the sum of automorphisms, and $X$ is a subgroup of $G$.

(a) $X$ is fully invariant if, and only if, it is characteristic.

(b) $X$ is commensurable with a fully invariant subgroup if, and only if, it is commensurable with a characteristic subgroup.

(c) $X$ is fully inert if, and only if, it is characteristically inert.
\end{proposition}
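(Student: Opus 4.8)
The plan is to observe first that in every one of the three biconditionals one implication is completely free: since $\Aut(G)\subseteq\End(G)$, any subgroup that is fully invariant is automatically characteristic, anything commensurable with a fully invariant subgroup is commensurable with a characteristic one, and any fully inert subgroup is characteristically inert. Thus the entire content lies in the reverse implications, and for those the hypothesis that every endomorphism $\phi$ of $G$ can be written as a finite sum $\phi=\alpha_1+\cdots+\alpha_k$ of automorphisms, combined with the counting estimate of Lemma~\ref{bound}, will do all the work.

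For part (a), suppose $X$ is characteristic, so that $\hat\alpha(X)=0$ for every automorphism $\alpha$. Given an arbitrary endomorphism $\phi$, I would write $\phi=\alpha_1+\cdots+\alpha_k$ with each $\alpha_i\in\Aut(G)$; then $|\hat\alpha_i(X)|=1$ for all $i$, and Lemma~\ref{bound} yields $|\hat\phi(X)|\leq|\hat\alpha_1(X)|\cdots|\hat\alpha_k(X)|=1$, forcing $\hat\phi(X)=0$. Hence $X$ is fully invariant. Part (c) is identical in spirit: if $X$ is characteristically inert then each $\hat\alpha_i(X)$ is finite, and the same application of Lemma~\ref{bound} bounds $|\hat\phi(X)|$ by the finite product $|\hat\alpha_1(X)|\cdots|\hat\alpha_k(X)|$, so $X$ is fully inert.

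Finally, part (b) requires no new argument, because it collapses onto (a): if $X$ is commensurable with a characteristic subgroup $C$, then by part (a) the subgroup $C$ is in fact fully invariant, and therefore $X$ is already commensurable with a fully invariant subgroup. I do not expect any genuine obstacle here; the only point demanding a little care is recognizing that the decomposition $\phi=\alpha_1+\cdots+\alpha_k$ has a length $k$ depending on $\phi$, which is harmless since Lemma~\ref{bound} accommodates sums of any finite length. The whole proposition is thus a clean formal consequence of Lemma~\ref{bound} together with the sum-of-automorphisms hypothesis.
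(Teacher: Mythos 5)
Your proposal is correct and follows essentially the same route as the paper's own proof: the forward implications are immediate from $\Aut(G)\subseteq\End(G)$, and the converses all reduce to writing an endomorphism as a finite sum of automorphisms and applying Lemma~\ref{bound}, with (b) deduced from (a) exactly as the paper does. No gaps.
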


\begin{proof}
The forward direction in all these equivalences follows immediately from the definitions.

If $\gamma$ is any endomorphism of $G$, then $\gamma = \phi_1+\cdots +\phi_k$, where each $\phi$ is an automorphism. So, assuming $X$ is characteristic, each $\hat \phi_i(X)=0$, which by Lemma~\ref{bound}, implies that $\hat \gamma(X)=0$. Letting $\gamma$ range over all endomorphisms of $G$ gives the converse in (a). And clearly, the converse in (b) follows from the converse in (a).

Similarly, if $X$ is characteristically inert, $\hat \phi_i(X)$ is finite for each $i=1, \dots, k$. So, again by Lemma~\ref{bound}, $\hat \gamma(X)$ will always be finite, as required.
\end{proof}

By letting $X$ range overall subgroups of $G$, we obtain the following result.

\begin{corollary}\label{frog}
If $G$ has the property that every endomorphism of $G$ is the sum of automorphisms, then $G$ has minimal full inertia if and only if it has minimal characteristic inertia. And in this case, for any subgroup $X$ of $G$, the two conditions in Proposition~\ref{next}(b) are logically equivalent to the two conditions in Proposition~\ref{next}(c).
\end{corollary}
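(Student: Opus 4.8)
The plan is to treat this corollary as a direct consequence of Proposition~\ref{next}, obtained by universally quantifying over all subgroups $X$ of $G$. First I would record that, under the standing hypothesis that every endomorphism of $G$ is a sum of automorphisms, all three equivalences of Proposition~\ref{next} are simultaneously available for each subgroup $X$. By definition, ``$G$ has minimal full inertia'' unwinds to the universally quantified implication: for every subgroup $X$, if $X$ is fully inert then $X$ is commensurable with a fully invariant subgroup.

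Next I would rewrite this implication one clause at a time using Proposition~\ref{next}. By part (c) the hypothesis ``$X$ is fully inert'' may be replaced throughout by ``$X$ is characteristically inert'', and by part (b) the conclusion ``$X$ is commensurable with a fully invariant subgroup'' may be replaced by ``$X$ is commensurable with a characteristic subgroup''. Since each substitution is a logical equivalence valid for that individual $X$, the quantified statement they produce---for every $X$, characteristically inert implies commensurable with a characteristic subgroup---is logically equivalent to the one we started with; but this is precisely the definition of ``$G$ has minimal characteristic inertia''. This establishes the biconditional asserted in the first sentence.

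For the final sentence I would fix a subgroup $X$ and assume $G$ has the (now common) minimal inertia property. The observation recorded just after the definition of characteristic/full inertia already supplies the forward directions: being commensurable with a fully invariant (respectively characteristic) subgroup forces $X$ to be fully (respectively characteristically) inert, so each condition of (b) implies each condition of (c). For the reverse directions I would invoke the minimal inertia hypothesis itself: minimal full inertia promotes ``fully inert'' to ``commensurable with a fully invariant subgroup'', and minimal characteristic inertia does the analogue on the characteristic side, so each condition of (c) implies each condition of (b). Combined with the internal equivalences furnished by parts (b) and (c), all four conditions then coincide.

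The content is entirely logical bookkeeping, the substantive work having already been carried out in Lemma~\ref{bound} and Proposition~\ref{next}; accordingly I anticipate no genuine obstacle. The only point demanding care is to keep straight which implications are unconditional and which rest on the minimal inertia hypothesis: the passage from (b) to (c) holds for \emph{every} group, whereas the passage from (c) to (b) is exactly the minimal inertia assumption, and it is this that makes all four conditions equivalent ``in this case''.
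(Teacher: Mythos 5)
Your proof is correct and follows exactly the route the paper intends: the paper offers no written proof beyond the remark ``by letting $X$ range over all subgroups of $G$,'' and your unwinding of the two minimal-inertia definitions via Proposition~\ref{next}(b) and (c), together with the observation that commensurability with an invariant subgroup always implies inertia while the converse is precisely the minimal-inertia hypothesis, is the intended argument spelled out in full.
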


We recall the following result, due to Paul Hill:

\begin{theorem}[\cite{H}, Theorem~4.2]\label{Hill} A totally projective $p$ group, $p\ne 2$, has the property that any endomorphism is the sum of two automorphisms.
\end{theorem}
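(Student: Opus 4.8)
The plan is to separate the purely ring-theoretic core of the statement from the structural, transfinite part, and to let the hypothesis $p\neq 2$ enter only through the core. The essential finite phenomenon is that over the field $\mathbb{F}_p$ with $p$ odd (so $|\mathbb{F}_p|\geq 3$) every square matrix is a sum of two invertible matrices, whereas over $\mathbb{F}_2$ this already fails for the $1\times 1$ identity. Translated to groups this is exactly the statement that the identity of $\Z/p$ is a sum of two automorphisms precisely when $p\neq 2$ (for $p=2$ the only automorphism is the identity and $\mathrm{id}+\mathrm{id}=0$), which both anchors the heuristic and explains why the hypothesis is unavoidable.

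First I would dispose of the case of a finite $p$-group $G$. Here $R=\End(G)$ is a finite ring whose Jacobson radical $J$ is nilpotent and with $R/J\cong\prod_i M_{n_i}(\mathbb{F}_p)$. Given $\phi\in R$, I would write its image as $\bar\phi=\bar u+\bar v$ with $\bar u,\bar v$ units of $R/J$, applying the matrix fact above in each factor. Lifting $\bar u$ to some $\alpha\in R$ and using that $J$ is nil (so an element of $R$ is a unit exactly when its image in $R/J$ is one), $\alpha$ is an automorphism; and $\beta:=\phi-\alpha$ satisfies $\bar\beta=\bar v$, hence is an automorphism as well. Thus $\phi=\alpha+\beta$ over finite groups, with the whole weight of the argument resting on the odd-$p$ linear algebra.

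For a general totally projective $G$ the ring $\End(G)$ is no longer semiperfect, so the finite argument must be globalized through Hill's structure theory. I would fix a continuous well-ordered chain $0=G_0\subset G_1\subset\cdots\subset G_\tau=G$ of nice subgroups with cyclic successive quotients, as furnished by Hill's third axiom of countability, and build $\alpha$ by transfinite recursion along this chain, always setting $\beta=\phi-\alpha$. At a successor step $G_{\lambda+1}=G_\lambda+\langle x\rangle$ one must choose the value of $\alpha$ on the new generator so that, modulo the part already constructed, both $\alpha$ and $\phi-\alpha$ restrict to isomorphisms on the factor $G_{\lambda+1}/G_\lambda\cong \Z/p^{e}$; this is precisely the problem of writing an element of the local ring $\Z/p^{e}$ as a sum of two units, solvable exactly when $p\neq 2$. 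At limit ordinals one passes to unions.

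The main obstacle, and the place where the real work lies, is the limit and coherence step: one must arrange the successor choices so that the resulting $\alpha=\bigcup_\lambda\alpha_\lambda$ is simultaneously injective and surjective (hence an automorphism, and then automatically height-preserving), and so that $\beta=\phi-\alpha$ is bijective as well. Guaranteeing surjectivity in the union while the finite steps only control the subquotients $G_{\lambda+1}/G_\lambda$ is exactly what forces the use of the nice (isotype) nature of the filtration, so that heights—and therefore the invertibility of \emph{both} maps—are read off correctly in $G$ rather than merely in each $G_{\lambda+1}$. This simultaneous bookkeeping of two mutually constrained maps across limits, rather than the odd-$p$ local solvability, is where the argument becomes genuinely delicate.
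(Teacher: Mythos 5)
The paper offers no proof of this statement at all---it is quoted directly from Hill \cite{H}, Theorem~4.2---so there is no internal argument to compare yours against; I can only assess the proposal on its own terms. Your finite case is correct and self-contained: every matrix over a field with at least three elements is a sum of two invertible matrices, $\End(G)$ modulo its (nilpotent) radical is a product of matrix rings over $\mathbb{F}_p$ when $G$ is finite, and units lift along a nil ideal, so $\phi=\alpha+(\phi-\alpha)$ with both summands automorphisms. The $\Z/2$ example also correctly locates why $p\neq 2$ is indispensable.

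The transfinite step, however, contains a genuine gap, and it is not only the limit-stage bookkeeping you flag at the end. An arbitrary endomorphism $\phi$ of $G$ has no reason to carry $G_\lambda$ into $G_\lambda$ for your nice composition series, so the successor requirement that ``both $\alpha$ and $\phi-\alpha$ restrict to isomorphisms on $G_{\lambda+1}/G_\lambda$'' is not well-posed: $\phi-\alpha$ induces no map on that quotient unless it preserves the chain, and you cannot force $\phi-\alpha$ to preserve a filtration that $\phi$ itself does not. Consequently the local solvability you invoke (writing an element of $\Z/p^{e}$ as a sum of two units) is attached to the wrong object, and the injectivity/surjectivity of the union---which you correctly identify as the crux---is left entirely unestablished. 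The known proof does not fix a filtration in advance; it works at the level of individual elements and height sequences, producing from $x$ and $z=\phi(x)$ a decomposition $z=y+y'$ with $\Vert y\Vert=\Vert y'\Vert=\Vert x\Vert$ (the Kaplansky--Hill device that this paper itself reproduces inside the proof of Example~\ref{transnonfull}, and the precise point where $p\neq 2$ enters), and then runs a simultaneous back-and-forth extension of two height-preserving isomorphisms between nice subgroups, closed off at limits by the extension and uniqueness theory of totally projective groups. Your sketch names that difficulty but supplies no mechanism for it, so the infinite case remains unproved as written.
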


In \cite{K} it was shown that any totally projective group has minimal full inertia.  Putting this together with Hill's result leads to the next observation.

\begin{corollary}\label{totproj}
If $G$ is a totally projective group $p$-group with $p\ne 2$, then $G$ has minimal characteristic inertia and, for a subgroup $X$ of $G$, the four statements in Proposition~\ref{next} (b) and (c) are equivalent.
\end{corollary}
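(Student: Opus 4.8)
The plan is to assemble the statement directly from the three ingredients already on the table: Hill's Theorem (Theorem~\ref{Hill}), the result from \cite{K} that totally projective groups have minimal full inertia, and Corollary~\ref{frog}. The guiding observation is that the hypotheses ``$p\ne 2$'' and ``totally projective'' are exactly what is needed to invoke Hill, and Hill in turn activates the machinery of Corollary~\ref{frog}.

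First I would verify that $G$ meets the hypothesis of Corollary~\ref{frog}, namely that every endomorphism of $G$ is a sum of automorphisms. Since $p\ne 2$ and $G$ is totally projective, Theorem~\ref{Hill} says every endomorphism is the sum of \emph{two} automorphisms; taking $k=2$, this is in particular a sum of automorphisms, so the hypothesis of Corollary~\ref{frog} is satisfied.

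Next I would record that $G$ has minimal full inertia. This is precisely the result of \cite{K} quoted in the text, valid for all totally projective groups with no restriction on the prime. Applying Corollary~\ref{frog}, which under the hypothesis just verified asserts that $G$ has minimal full inertia if and only if it has minimal characteristic inertia, the first property forces the second; hence $G$ has minimal characteristic inertia.

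Finally, for the equivalence of the four statements, I would combine Proposition~\ref{next} with the closing assertion of Corollary~\ref{frog}. Proposition~\ref{next}(b) gives that ``$X$ is commensurable with a fully invariant subgroup'' is equivalent to ``$X$ is commensurable with a characteristic subgroup,'' while Proposition~\ref{next}(c) gives that ``$X$ is fully inert'' is equivalent to ``$X$ is characteristically inert''; the second part of Corollary~\ref{frog} bridges these two pairs, making all four conditions mutually equivalent. There is essentially no obstacle to surmount here, as the corollary is a pure synthesis of earlier results; the only point meriting even a trivial check is that ``sum of two automorphisms'' is a special case of ``sum of automorphisms,'' so that Hill's theorem genuinely supplies the hypothesis demanded by Corollary~\ref{frog}.
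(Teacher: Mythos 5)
Your proposal is correct and follows exactly the paper's route: invoke Hill's theorem to verify the hypothesis of Corollary~\ref{frog}, cite \cite{K} for minimal full inertia of totally projective groups, and let Corollary~\ref{frog} deliver both the minimal characteristic inertia and the equivalence of the four conditions. The paper presents this as an immediate consequence of the preceding discussion, and your write-up is the same synthesis spelled out in slightly more detail.
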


The next result is often attributed to Kaplansky (see, e.g., \cite{Kap} as well as the proof of \cite[Theorem 4.3]{CDG}), however for the reader's convenience and completeness of the exposition we shall provide a simple proof.

\begin{lemma}\label{sum}
Suppose $A$ is a group and $G=A\oplus A$. Then any endomorphism of $G$ is the sum of at most four automorphisms.
\end{lemma}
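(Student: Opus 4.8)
The plan is to use the standard identification $\End(A\oplus A)\cong M_2(\End A)$, under which an endomorphism $\phi$ of $G=A\oplus A$ is recorded as a matrix $\endo{\alpha}{\beta}{\gamma}{\delta}$ with $\alpha,\beta,\gamma,\delta\in\End A$, composition becoming matrix multiplication and the automorphisms of $G$ becoming exactly the units (two-sided invertible matrices) of $M_2(\End A)$. The problem thus reduces to writing an arbitrary such matrix as a sum of at most four invertible matrices. The essential difficulty to keep in mind is that $\alpha,\beta,\gamma,\delta$ are completely arbitrary endomorphisms of $A$ --- in general none of them is a unit of $\End A$ --- so one cannot diagonalize, invoke a determinant, or appeal to the invertibility of individual entries; every invertible matrix used in the decomposition must be invertible for reasons that are independent of its entries.

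First I would record a short stock of matrices that are automorphisms no matter what their entries are, each time by simply displaying a two-sided inverse. Concretely, for any $x\in\End A$ the transvections $\endo{1}{0}{x}{1}$ and $\endo{1}{x}{0}{1}$ are units, with inverses $\endo{1}{0}{-x}{1}$ and $\endo{1}{-x}{0}{1}$; and for any $a,d\in\End A$ the companion-type matrices $\endo{a}{-1}{1}{0}$ and $\endo{0}{1}{-1}{d}$ are units, with inverses $\endo{0}{1}{-1}{a}$ and $\endo{d}{-1}{1}{0}$ respectively (each verified by a direct two-line multiplication on both sides). The point is that these invertibilities never require $x$, $a$, or $d$ to be a unit of $\End A$, which is precisely what the generality of the entries demands.

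The heart of the argument is then an explicit four-term decomposition. I would set
$$M_1=\endo{1}{0}{\gamma}{1},\quad M_2=\endo{1}{\beta}{0}{1},\quad M_3=\endo{\alpha-2}{-1}{1}{0},\quad M_4=\endo{0}{1}{-1}{\delta-2},$$
each of which is an automorphism by the preceding step. A direct computation gives $M_1+M_2=\endo{2}{\beta}{\gamma}{2}$ and $M_3+M_4=\endo{\alpha-2}{0}{0}{\delta-2}$, whose sum is exactly $\endo{\alpha}{\beta}{\gamma}{\delta}=\phi$. Intuitively, the two transvections $M_1,M_2$ absorb the off-diagonal entries $\beta,\gamma$ while each contributes $2$ to a diagonal slot, and the two companion matrices $M_3,M_4$ then supply the corrected diagonal $\alpha-2,\delta-2$ while cancelling each other off the diagonal. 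This would exhibit $\phi$ as a sum of four automorphisms and complete the proof. I expect the only point needing care to be the bookkeeping that the four displayed matrices really are invertible and really sum to $\phi$, since the decomposition itself is essentially forced once one insists on using only entry-independent units.
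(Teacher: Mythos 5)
Your proof is correct and takes essentially the same approach as the paper: both identify $\End(G)$ with $2\times 2$ matrices over $\End(A)$ and write an arbitrary matrix as a sum of four units whose invertibility is independent of their entries (two transvections absorbing the off-diagonal entries and two companion-type matrices absorbing the diagonal). The only difference is the specific choice of the four matrices, which is immaterial.
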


\begin{proof}
Suppose $R$ is the endomorphism ring of $A$ and $1_A\in R$ is the identity. Writing homomorphisms on the right, any endomorphism $G$ can be thought of as multiplication on the right by a 2 by 2 matrix with entries in $R$. For any such endomorphism we have a decomposition:

\medskip

$$
\endo \alpha \beta \delta \epsilon=\endo {1_A} \beta 0 {1_A} + \endo {-1_A} 0 \delta  {-1_A}+
           \endo \alpha {1_A} {1_A} 0  + \endo 0 {-1_A} {-1_A} \epsilon.
$$

\medskip

\noindent Clearly, the last four matrices represent automorphisms of $G$, as expected.
\end{proof}

In particular, this means that Corollary~\ref{frog} applies to any such ``squared" group.

We now arrive at the following curious assertion, which actually somewhat extends \cite[Lemma 3.4]{GS} and is in parallel to a well-known result from \cite{FG}.

\begin{theorem}\label{mincharfull} If $A$ is a group and $G=A\oplus A$, then the following three points are equivalent:

(a) $A$ has minimal full inertia.

(b) $G$ has minimal full inertia;

(c) $G$ has minimal characteristic inertia.
\end{theorem}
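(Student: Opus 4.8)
The plan is to split the three-way equivalence into the easy loop (b)$\Leftrightarrow$(c) and the substantive loop (a)$\Leftrightarrow$(b). For the first I would simply invoke Lemma~\ref{sum}: since $G=A\oplus A$, every endomorphism of $G$ is a sum of (at most four) automorphisms, so Corollary~\ref{frog} applies verbatim and yields that $G$ has minimal full inertia if and only if it has minimal characteristic inertia, with no further work needed. Before attacking (a)$\Leftrightarrow$(b) I would isolate one structural fact: the fully invariant subgroups of $G=A\oplus A$ are precisely the subgroups $B\oplus B$ with $B$ fully invariant in $A$. One direction is a direct check; for the other, writing endomorphisms on the right as in Lemma~\ref{sum}, I apply the orthogonal idempotents $e_1=\endo{1_A}0 0 0$ and $e_2=\endo 0 0 0 {1_A}$ to a fully invariant $K$ to get $K=Ke_1\oplus Ke_2=(B_1\oplus 0)\oplus(0\oplus B_2)$, then use the swap $\endo 0 {1_A}{1_A}0$ to force $B_1=B_2=:B$, and the endomorphisms $\endo\alpha 0 0 0$ to force $B$ fully invariant in $A$. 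This dictionary is what lets me pass between fully invariant subgroups of $G$ and of $A$.

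For (b)$\Rightarrow$(a): given $X$ fully inert in $A$, I would first show $X\oplus X$ is fully inert in $G$. Indeed, for $\phi=\endo\alpha\beta\gamma\delta$ the image of $\hat\phi(X\oplus X)$ under the first projection $G/(X\oplus X)\to A/X$ lies inside $\hat\alpha(X)+\hat\gamma(X)$, which is finite, and similarly in the second coordinate, so $\hat\phi(X\oplus X)$ embeds in a finite group. Applying (b), $X\oplus X$ is commensurable with a fully invariant subgroup of $G$, which by the structural fact is some $B\oplus B$. Since $(X\oplus X)\cap(B\oplus B)=(X\cap B)\oplus(X\cap B)$, finite index on both sides is equivalent to $X$ being commensurable with $B$; as $B$ is fully invariant in $A$, this proves (a).

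The core is (a)$\Rightarrow$(b). Given $H$ fully inert in $G$, I would reduce it to product form and then reconcile its two coordinates. First, letting $\pi_1$ be the first coordinate projection, $\pi_1(H)$ is fully inert in $A$: for $\alpha\in\End(A)$ the endomorphism $\psi=\endo\alpha 0 0 0$ of $G$ satisfies $\pi_1(\psi(H))=\alpha(\pi_1(H))$, so the induced surjection $G/H\to A/\pi_1(H)$ carries the finite group $\hat\psi(H)$ onto $\hat\alpha(\pi_1(H))$, forcing the latter to be finite. Second, with $D_1=\{a:(a,0)\in H\}$ one checks that $\hat e_1(H)\cong\pi_1(H)/D_1$, so $[\pi_1(H):D_1]<\infty$; together with the analogous bound in the second coordinate this sandwiches $H$, with finite index, between $D_1\oplus D_2$ and $P:=\pi_1(H)\oplus\pi_2(H)$, so $H$ is commensurable with $P$. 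Third, the swap $\endo 0{1_A}{1_A}0$, pushed into $A/\pi_1(H)$, shows $(\pi_2(H)+\pi_1(H))/\pi_1(H)$ is finite, and symmetrically, so $\pi_1(H)$ and $\pi_2(H)$ are commensurable. Finally, (a) furnishes a fully invariant $B$ commensurable with $\pi_1(H)$, hence also with $\pi_2(H)$, whence $P$ is commensurable with $B\oplus B$; transitivity of commensurability then gives that $H$ is commensurable with the fully invariant subgroup $B\oplus B$, establishing (b).

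I expect the main obstacle to be exactly this reconciliation in (a)$\Rightarrow$(b): producing a \emph{single} fully invariant $B$, rather than two possibly different ones coming from the two projections, that is commensurable with all of $H$. The two ingredients that resolve it are the finiteness of $\pi_i(H)/D_i$, which collapses $H$ onto the product $P$, and the swap automorphism, which forces the two projections to be mutually commensurable.
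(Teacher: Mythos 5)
Your proof is correct and follows essentially the same route as the paper's: (b)$\Leftrightarrow$(c) via Lemma~\ref{sum} and Corollary~\ref{frog}, and (a)$\Leftrightarrow$(b) via the coordinate idempotents together with the swap automorphism, with the classification of fully invariant subgroups of $A\oplus A$ as $B\oplus B$ doing the same work it does in the paper. The only cosmetic difference is in (a)$\Rightarrow$(b): the paper symmetrizes at once, showing $X\sim Y\oplus Y$ with $Y=\pi_1(X)+\pi_2(X)$, whereas you keep the two projections separate, show $H\sim\pi_1(H)\oplus\pi_2(H)$ and that the projections are mutually commensurable --- the ingredients and conclusions are identical.
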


\begin{proof} The equivalence of (b) and (c) is an immediate consequence of Corollary~\ref{frog} and Lemma~\ref{sum}. To show they are equivalent to (a) we first fix some notation. Let $\pi_1:G\to A$ be the projection onto the first summand and $\rho_1:A\to G$ be the obvious inclusion into the first summand; define $\pi_2$ and $\rho_2$ similarly.  We also let $\kappa_i=\rho_i\circ \pi_i:G\to G$  ($i=1,2$) be the corresponding idempotent endomorphisms and $\sigma:G\to G$ be the automorphism given by $\sigma((a_1,a_2))=(a_2,a_1)$.

We first assume (b) holds. To prove (a), suppose $Y$ is a fully inert subgroup of $A$. We claim that $X:=Y\oplus Y$ is fully inert in $G$. If $\gamma$ is an endomorphism of $G$, and for $i,j=1,2$ we let $\gamma_{i,j}=\pi_i\circ\gamma\circ \rho_j:A\to A$, it is clear that
$$
          \gamma(X)\subseteq [(\gamma_{1,1}) (Y))+(\gamma_{1,2})(Y))]\oplus [(\gamma_{2,1}) (Y))+(\gamma_{2,2})(Y))]:=L
$$
For $i,j\in \{1,2\}$, since $Y$ is fully inert in $A$, it follows that $\hat\gamma_{i,j}(Y)$ will be finite.
Adding in the two summands clearly determines a surjection
$$
         (\hat\gamma_{1,1}(Y)\oplus  \hat\gamma_{1,2}(Y))\oplus   (\hat\gamma_{2,1}(Y)\oplus  \hat\gamma_{2,2}(Y)) \twoheadrightarrow [L+X]/X.
$$
Therefore, $[L+X]/X$ must be finite, so that $\hat\gamma(X)\subseteq [L+X]/X$ is also finite.

Letting $\gamma$ range over all endomorphisms of $G$, we conclude that $X$ is fully inert in $G$. So, by hypothesis, $X\sim W$, where $W$ is fully invariant subgroup in $G$. Since $\kappa_i(W)\subseteq W$ ($i=1,2$) and $\sigma (W)\subseteq W$, we can conclude that $W=V\oplus V$, where $V=\pi_i(W)$ is fully invariant in $A$. It readily follows that $Y\sim V$, giving the result.

For the converse, suppose $A$ has minimal full inertia. Let $X$ be any fully inert subgroup of $G$. Note that if
$$
           Y_1=\kappa_1(X)+\sigma(\kappa_2(X))\subseteq A\oplus 0, \ \ {\rm and}\ \  Y_2=\kappa_2(X)+\sigma(\kappa_1(X))\subseteq 0\oplus A,
$$
then $\sigma$ restricts to an isomorphism between $Y_1$ and $Y_2$; let $Y=\pi_1(Y_1)=\pi_2(Y_2)\subseteq A$.
Since $X\subseteq \kappa_1(X)+\kappa_2(X)$, we can conclude that $X\subseteq Y_1+Y_2= Y\oplus Y$. And since $X$ is fully inert, for $i=1,2$, both groups
$$
          R_i:=\hat \kappa_i(X)=   (\kappa_i(X)+X)/X    \ \ {\rm and}\  \             S_i:=\hat {\sigma\circ \kappa}_i(X)=  (\sigma(\kappa_i(X))+X)/X
$$
are finite. Since addition in each summand gives a surjective homomorphism
$$
        (R_1\oplus S_2)\oplus (R_2\oplus S_1) \twoheadrightarrow (Y_1 +Y_2)/X = (Y\oplus Y)/X,
$$
we can conclude that $(Y\oplus Y)/X$ is finite; i.e., $X\sim Y\oplus Y$. Now, since $X$ is fully inert in $G$, $Y\oplus Y$ will be, as well. And from this, we can easily conclude that $Y$ is fully inert in $A$. Therefore, there is a fully invariant subgroup $V\subseteq A$ such that $Y\sim V$. It follows that $W:=V\oplus V$ is fully invariant in $G$ and $X\sim Y\oplus Y\sim V\oplus V=W$. Thus, $G$ has minimal full inertia, completing the proof.
\end{proof}

Note that the equivalence of conditions (a) and (b) is clearly related to Lemma~3.4 of \cite{GS}, where the groups were assumed to be fully transitive.

\medskip

In regard to the last theorem and \cite[Theorem 3.5]{GS}, a question which directly arises is what can be said for the group $G$ being an infinite direct sum of copies of the group $A$?

\medskip

We now consider when the one property implies the other. The following observation shows that in one important case, the property of having minimal characteristic inertia is stronger than having minimal full inertia.

\begin{proposition}\label{minfull}
Suppose $G$ has the property that every characteristic subgroup of $G$ is fully invariant. If $G$ has minimal characteristic inertia, then it has minimal full inertia.
\end{proposition}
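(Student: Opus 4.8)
The plan is to observe that, under the stated hypotheses, verifying minimal full inertia reduces directly to an instance of minimal characteristic inertia. The argument is a short chain of three implications, and I do not expect a genuine obstacle; the only delicate point is to keep straight which of the two inertia properties is the stronger one.

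First I would fix an arbitrary fully inert subgroup $X$ of $G$ and note that $X$ is automatically characteristically inert. Indeed, the defining requirement for full inertia is that $\hat\phi(X)$ be finite for \emph{every endomorphism} $\phi$ of $G$, whereas characteristic inertia only requires this for \emph{every automorphism}. Since every automorphism of $G$ is in particular an endomorphism of $G$, the finiteness condition imposed by full inertia is demanded over a larger collection of maps, and so it implies the condition imposed by characteristic inertia. Hence fully inert implies characteristically inert.

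Next I would apply the hypothesis that $G$ has minimal characteristic inertia to the subgroup $X$: this produces a characteristic subgroup $C$ of $G$ with $X \sim C$. Finally, invoking the standing assumption that every characteristic subgroup of $G$ is fully invariant, we learn that $C$ is in fact fully invariant. Thus $X$ is commensurable with a fully invariant subgroup of $G$. Since $X$ was an arbitrary fully inert subgroup, every fully inert subgroup of $G$ is commensurable with a fully invariant subgroup, which is precisely the assertion that $G$ has minimal full inertia.

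The one point worth flagging, rather than an actual difficulty, is the orientation of the inclusion in the first step: because finiteness is required of more maps, it is full inertia that is the stronger notion, so the correct implication is ``fully inert $\Rightarrow$ characteristically inert,'' and likewise the hypothesis supplies ``characteristic $\Rightarrow$ fully invariant'' in the direction we need (the reverse implication always holds and is not used here). With these two orientations fixed, the proof is immediate from the definitions.
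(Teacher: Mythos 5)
Your proposal is correct and follows exactly the same three-step argument as the paper: fully inert implies characteristically inert, apply minimal characteristic inertia to obtain a commensurable characteristic subgroup, then use the hypothesis to upgrade it to fully invariant. Nothing further is needed.
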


\begin{proof}
Suppose $X$ is a fully inert subgroup of $G$; we need to show that it is commensurable with a fully invariant subgroup.

Certainly, $X$ is characteristically inert, as well. Therefore, there is a characteristic subgroup $Y\subseteq G$ that is commensurable with $X$. By hypothesis, we know that $Y$ will also be fully invariant in $G$, completing the argument.
\end{proof}

The last result has an important special case.

\begin{corollary}
Suppose $p$ is odd and $G$ is transitive. If $G$ has minimal characteristic inertia, then it has minimal full inertia.
\end{corollary}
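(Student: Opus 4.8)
The plan is to recognize the corollary as a direct application of Proposition~\ref{minfull}. Since $G$ is assumed to have minimal characteristic inertia, the only thing left to verify is the hypothesis of that proposition, namely that every characteristic subgroup of $G$ is fully invariant. Thus the entire content of the corollary reduces to the purely structural statement that, for an odd prime $p$, a transitive $p$-group has the property that its characteristic subgroups coincide with its fully invariant subgroups; once this is established, Proposition~\ref{minfull} delivers the conclusion at once.

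To verify this reduced statement I would argue through the description of both families of subgroups in terms of the Ulm (height) sequence $U(x)$ of an element $x$. One inclusion is free: every fully invariant subgroup is characteristic because automorphisms are endomorphisms. For the reverse inclusion, observe first that a subgroup closed upward in the Ulm order (one containing $y$ whenever it contains an $x$ with $U(y)\ge U(x)$) is automatically fully invariant, since an endomorphism never lowers heights; and, on the other side, transitivity of $G$ guarantees that a characteristic subgroup is precisely one closed under replacing an element by any element of the same Ulm sequence. Consequently the corollary reduces to the combinatorial assertion that, when $p\ne 2$, a subgroup of $G$ closed under equal Ulm sequences is already closed upward in the Ulm order. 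This is exactly the circle of ideas relating transitivity and full transitivity for odd primes, and I would invoke the corresponding known result from \cite{FG} rather than rederive it.

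The heart of the matter, and the step I expect to be the main obstacle, is this passage from closure under equal Ulm sequences to closure upward in the Ulm order. The mechanism I have in mind is as follows: given $x$ in a characteristic subgroup $H$ and a target $y$ with $U(y)\ge U(x)$, one manufactures an element of $H$ whose Ulm sequence is exactly $U(y)$ by adding together suitable automorphic images of $x$, each of which lies in $H$ because $H$ is characteristic, arranging the leading heights to cancel so that the Ulm sequence is raised precisely to $U(y)$; closure of $H$ under equal Ulm sequences then forces $y\in H$. It is exactly this cancellation step that requires $2$ to be invertible modulo $p$, which is where the hypothesis $p\ne 2$ enters and cannot be dropped, the conclusion being known to fail for $p=2$. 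Controlling this construction so that it produces the desired sequence $U(y)$ rather than merely some higher one is the delicate point, and is the reason I would lean on the established result of \cite{FG} to carry it out.
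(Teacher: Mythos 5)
Your proposal is correct and follows essentially the same route as the paper: both reduce the corollary via Proposition~\ref{minfull} to the classical fact that for odd $p$ a characteristic subgroup of a transitive group is fully invariant. The only difference is that where you would cite \cite{FG}, the paper sketches the underlying Kaplansky-style decomposition directly --- given $x\in X$ and an endomorphism $\gamma$, since $\Vert x\Vert\leq\Vert\gamma(x)\Vert$ one writes $\gamma(x)=y_1+y_2$ with $\Vert y_1\Vert=\Vert y_2\Vert=\Vert x\Vert$, so transitivity puts each $y_i$ in $X$ and hence $\gamma(x)\in X$ --- which is exactly the ``manufacture by summing automorphic images'' mechanism you describe.
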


\begin{proof}
It is well known that, when $p$ is odd and $G$ is transitive, a characteristic subgroup must be fully invariant.  Indeed, suppose $X$ is characteristic, $\gamma$ is an endomorphism of $G$ and $x\in X$. The height sequences satisfy the inequality $\Vert x\Vert\leq \Vert \gamma(x)\Vert$. By the argument of (\cite {K}, Theorem~26), this implies that $\phi(x)=y_1+y_2$, where $\Vert x\Vert=\Vert y_1 \Vert =\Vert y_2 \Vert$. So, there are automorphisms $\phi_i$ ($i=1,2$) such that $y_i=\phi_i(x)\in X$. Thus, $\gamma (x)=y_1+y_2\in X$, as required.
\end{proof}

Since separable groups are always transitive, we have the following consequence.

\begin{corollary}\label{sep}
Suppose $p$ is odd and $G$ is separable. If $G$ has minimal characteristic inertia, then it has minimal full inertia.
\end{corollary}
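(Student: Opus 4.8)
The plan is to reduce the statement directly to the preceding corollary, whose hypotheses ask only that $p$ be odd and that $G$ be transitive. Since we are already assuming $p$ is odd, the entire content of the argument is to verify that the separability of $G$ guarantees transitivity; once this is in place the conclusion is immediate.

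First I would recall the relevant notions. A $p$-group $G$ is separable when $p^{\omega}G = 0$, and it is transitive when any two elements sharing the same height (Ulm) sequence are carried one to the other by some automorphism of $G$. The key classical fact, which I would simply cite, is that every separable $p$-group is transitive: in a separable group an element sits inside a basic subgroup and its behaviour is governed by the finitely many genuine gaps in its height sequence, with no heights equal to $\infty$ occurring, and matching up the cyclic summands supporting two elements of equal height sequence then produces the required automorphism. This is precisely the step where the absence of elements of infinite height is used.

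Granting this, the argument finishes in one line: with $p$ odd and $G$ separable, hence transitive, the preceding corollary applies verbatim, and so minimal characteristic inertia forces minimal full inertia. The only obstacle worth naming is the transitivity of separable groups, and even this is standard rather than novel; the remainder is a direct appeal to the corollary just established, so I expect the write-up to amount to a short deduction once the transitivity fact is invoked.
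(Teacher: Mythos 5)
Your proposal matches the paper's own argument: the paper derives Corollary~\ref{sep} in one line by noting that separable groups are always transitive and then invoking the preceding corollary for odd $p$ and transitive $G$. Your additional sketch of why separable groups are transitive is standard background and consistent with what the paper takes for granted.
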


So, if we want to find a group $G$ that has minimal full inertia, but not minimal characteristic inertia, we must have either $p=2$ or $p^\omega G\ne 0$. For instance, if $B=\oplus_{n\in \mathbb N} \mathbb Z_{2^n}$ is the standard direct sum of cyclic $2$-groups, then it is known with the aid of \cite{GS} that both $B$ and $\overline B$ has minimal full inertia. So, if either fails to have the minimal characteristic inertia, then we would have our wanted example.

We end this discussion with one last observation which expresses the question entirely in terms of minimal characteristic inertia.

\begin{corollary}
The group $A$ has minimal full inertia, but not minimal characteristic inertia if, and only if, $G=A\oplus A$ has minimal characteristic inertia, but $A$ itself does not have minimal characteristic inertia.
\end{corollary}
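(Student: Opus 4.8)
The plan is to observe that both sides of the asserted biconditional share the common clause ``$A$ does not have minimal characteristic inertia.'' After factoring out this shared hypothesis, the claim collapses to a single equivalence: ``$A$ has minimal full inertia'' if and only if ``$G=A\oplus A$ has minimal characteristic inertia.'' But this is precisely the equivalence of conditions (a) and (c) in Theorem~\ref{mincharfull}, so the entire corollary is an immediate formal consequence of that theorem together with the trivial observation that the negated clause need not be altered.

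Concretely, I would argue each direction as follows. For the forward implication, assume $A$ has minimal full inertia but not minimal characteristic inertia. The first hypothesis, via the (a)$\Rightarrow$(c) direction of Theorem~\ref{mincharfull}, says exactly that $G$ has minimal characteristic inertia; coupling this with the second hypothesis (which mentions only $A$ and so carries over verbatim) produces the right-hand side. For the converse, assume $G$ has minimal characteristic inertia while $A$ does not. Applying the (c)$\Rightarrow$(a) direction of Theorem~\ref{mincharfull} converts the hypothesis on $G$ into the statement that $A$ has minimal full inertia, and once again the clause that $A$ lacks minimal characteristic inertia transfers unchanged, yielding the left-hand side.

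There is no real obstacle here, since the proof is pure substitution of one logically equivalent condition for another inside a biconditional. The only point worth flagging is that the hypothesis ``$A$ does not have minimal characteristic inertia'' appears identically on both sides and therefore requires no transformation; all of the mathematical content has already been discharged in establishing Theorem~\ref{mincharfull}.
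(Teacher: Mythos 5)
Your proof is correct and follows exactly the route the paper intends: the corollary is an immediate consequence of the equivalence of conditions (a) and (c) in Theorem~\ref{mincharfull}, with the common clause that $A$ lacks minimal characteristic inertia carried along unchanged on both sides. The paper states this as an observation without a written-out proof, and your argument supplies precisely the missing (trivial) substitution step.
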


Using techniques from \cite {K}, our next goal is to verify that every separable group is a summand of a group with both minimal characteristic inertia and minimal full inertia.  The following easy result from that work will be useful.

\begin{lemma}[\cite K, Lemma~2.3] \label{quotient}Suppose $L$ is a group and $B, C$ are subgroups of $L$.

(a) If $B\subseteq C$ and $k<\omega$, then $B\sim C$ if, and only if, $B[p^k]\sim C[p^k]$ and $p^k B\sim p^k C$.

(b) If $n<\omega$ and $B[p^{n}]\subseteq C[p^{n}]$, then $B[p^{n}]\sim C[p^{n}]$ if, and only if, $(p^k B)[p]\sim (p^k C)[p]$ for all $k< n$.
\end{lemma}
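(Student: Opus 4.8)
The first thing I would observe is that, since every comparison in the statement is between genuinely nested subgroups, commensurability can be replaced throughout by finite index. In part (a) this is clear from $B\subseteq C$; in part (b) it follows because $B[p^n]\subseteq C[p^n]$ forces $B[p^j]\subseteq C[p^j]$ for all $j\le n$ and hence $(p^jB)[p]\subseteq(p^jC)[p]$. So every assertion ``$P\sim Q$'' becomes ``$[Q:P]<\infty$'', and the whole lemma reduces to statements about when certain quotient groups are finite. The two arithmetic facts driving everything are that a finite product of group orders is finite precisely when each factor is, and that in a short exact sequence the middle term is finite if and only if both outer terms are.

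For part (a), the plan is to exploit multiplication by $p^k$. For any group $G$ this yields the short exact sequence $0\to G[p^k]\to G\xrightarrow{p^k} p^kG\to 0$. Applying this simultaneously to $B\subseteq C$ gives a commutative ladder whose three vertical maps are the natural inclusions; since these are injective, the snake lemma collapses to
$$0\to C[p^k]/B[p^k]\to C/B\to p^kC/p^kB\to 0.$$
Thus $C/B$ is finite if and only if both outer quotients are, which is exactly the equivalence $B\sim C\iff\bigl(B[p^k]\sim C[p^k]\text{ and }p^kB\sim p^kC\bigr)$.

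For part (b), the key lemma I would isolate first is the \emph{layer isomorphism}: for any group $G$, multiplication by $p^j$ induces an isomorphism $G[p^{j+1}]/G[p^j]\cong(p^jG)[p]$, since the image of $G[p^{j+1}]$ is precisely $(p^jG)[p]$ and the kernel is $G[p^j]$. Applying this to $B$ and to $C$, and checking that the inclusion $B[p^{j+1}]/B[p^j]\hookrightarrow C[p^{j+1}]/C[p^j]$ corresponds to $(p^jB)[p]\hookrightarrow(p^jC)[p]$, reduces the problem to the socle-layers. The remaining task is to assemble the layers into the single index $[C[p^n]:B[p^n]]$, and this is where I expect the only real friction: the $B$- and $C$-filtrations interleave, so a naive telescoping of $B[p^\bullet]\subseteq C[p^\bullet]$ does not work. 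Instead I would filter $C[p^n]$ by the chain $B[p^n]=B[p^n]+C[p^0]\subseteq B[p^n]+C[p^1]\subseteq\cdots\subseteq B[p^n]+C[p^n]=C[p^n]$ and compute each successive quotient. Using Dedekind's modular law together with the identities $C[p^{j+1}]\cap B[p^n]=B[p^{j+1}]$ and $C[p^{j+1}]\cap(B[p^n]+C[p^j])=C[p^j]+B[p^{j+1}]$, each successive quotient is isomorphic to $(p^jC)[p]/(p^jB)[p]$, giving
$$[C[p^n]:B[p^n]]=\prod_{k=0}^{n-1}[(p^kC)[p]:(p^kB)[p]].$$
Hence the left side is finite exactly when each factor on the right is, which is the desired equivalence.

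The one genuinely delicate point, and the step I would write out most carefully, is the verification of the two intersection identities used when invoking the modular law; both rest on the elementary remark that an element of $B$ annihilated by $p^{j+1}$ must already lie in $B[p^{j+1}]$, but spelling them out is what makes the identification of the $j$-th layer with $(p^jC)[p]/(p^jB)[p]$ airtight.
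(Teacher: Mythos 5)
Your argument is correct and complete. Note that the paper itself gives no proof of this lemma---it is quoted verbatim from \cite{K}, Lemma~2.3---so there is nothing internal to compare against; your write-up supplies a valid self-contained proof. Both halves check out: in (a) the snake lemma applied to the ladder of multiplication-by-$p^k$ sequences does collapse to the exact sequence $0\to C[p^k]/B[p^k]\to C/B\to p^kC/p^kB\to 0$ (the kernel of $C/B\to p^kC/p^kB$ is $(B+C[p^k])/B\cong C[p^k]/B[p^k]$, using $B\cap C[p^k]=B[p^k]$), and in (b) the filtration $D_j=B[p^n]+C[p^j]$ together with the modular-law identities you flag really does give $D_{j+1}/D_j\cong(p^jC)[p]/(p^jB)[p]$, so the index $[C[p^n]:B[p^n]]$ factors as the product of the layer indices. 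Your preliminary observation that all the commensurabilities reduce to finite-index statements between nested subgroups (including the derived containments $(p^jB)[p]\subseteq(p^jC)[p]$ for $j<n$) is exactly the point that makes the bookkeeping clean.
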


If $H$ is a group, then any endomorphism or automorphism on $H$ restricts to an endomorphism or automorphisms, respectively, on $p^\omega H$. The following construction applies when all of these restricted automorphisms are simply multiplications, which implies that every subgroup of $p^\omega H$ will be characteristic in $H$.

\begin{theorem}\label{big} Suppose $H$ is a group such that $p^{\omega+1} H=0$ and every automorphism of $H$ when restricted to $p^\omega H$ is multiplication by some (non-zero) element of $\Z_p$. Then there is a separable group $K$ such that the group $G=H\oplus K$ has minimal characteristic inertia.
\end{theorem}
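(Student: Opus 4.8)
The plan is to choose $K$ to be an extremely homogeneous separable group and then to reduce the whole question to a level-by-level comparison of socles through Lemma~\ref{quotient}. Write $P=p^{\w}H$; since $K$ will be separable we have $p^{\w}G=p^{\w}H=P$ and $p^{\w+1}G=0$, so $P$ is an elementary abelian group lying inside the socle $G[p]$, and $G/P$ is separable. Concretely I would take $K=\bigoplus_{n\ge 1}\bigl(\bigoplus_{\lambda}\Z_{p^{n}}\bigr)$ with $\lambda=|H|+\aln$, so that every finite Ulm--Kaplansky invariant of $G$ is $f_{n}(G)=f_{n}(H)+\lambda=\lambda$, hence infinite and independent of $n$. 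The point of this choice is to make the finite-height part of $G$ as transitive as possible: a direct sum of cyclics with all invariants infinite is transitive, and after absorbing $H$ the invariants remain infinite and constant, so on the finite-height part of $G[p]$ the only invariant distinguishing socle elements is the height itself.

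Two structural facts should then be recorded. First, every subgroup of $P$ is characteristic in $G$: by hypothesis every automorphism of $H$ restricts to a scalar on $P$, and since $K$ is separable it contains no nonzero element of infinite height, so for $\phi\in\Aut(G)$ and $x\in P$ the image $\phi(x)$ again lies in $p^{\w}G=P$ and its value is unchanged when one follows $\phi$ by the projection of $G$ onto $H$; consequently $\phi|_{P}$ acts as a scalar and fixes every subspace of $P$. (Verifying carefully that this $H$-component really reproduces the scalar action on $P$, i.e.\ that no automorphism of $G$ — as opposed to one of the summand $H$ — can act non-scalarly on $P$, is the place where separability of $K$ is genuinely used.) Second, automorphisms preserve heights, so $\Aut(G)$ respects the decomposition $G[p]=P\oplus(\text{finite-height socle})$, and on the finite-height part the $\Aut(G)$-orbits are exactly the height classes, while on $P$ they are the punctured lines; thus a subspace of $G[p]$ is a union of $\Aut(G)$-orbits precisely when it is an arbitrary subspace of $P$ together with a height-closed subspace of the finite-height part.

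For the main argument, let $X$ be a characteristically inert subgroup of $G$; I must produce a characteristic subgroup commensurable with it. By Lemma~\ref{quotient} it suffices to arrange that $(p^{k}X)[p]\sim(p^{k}C)[p]$ for every $k<\w$, where $C$ is the characteristic subgroup to be built. On the height-$\w$ layer I set the $P$-part of $C$ equal to $X\cap P$, which is characteristic by the first fact above. On each finite layer, the characteristic inertia of $X$ forces the image of $X[p]$ in $(p^{k}G)[p]/(p^{k+1}G)[p]$ to differ only by a finite group from a union of $\Aut(G)$-orbits: for any single automorphism the discrepancy $\hat\phi(X)$ is finite, and Lemma~\ref{bound} controls the spread produced by combining the automorphisms realizing the relevant height-preserving permutations granted by the large uniform invariants of $K$. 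Collecting these height-closed pieces together with $X\cap P$ yields, by the second fact, a characteristic subgroup whose layered socle agrees with that of $X$ up to a finite group at every level, and reassembling via Lemma~\ref{quotient} gives $X\sim C$.

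I expect the main obstacle to be the interface between the finite-height part and the height-$\w$ socle $P$. The crux is to show that a characteristically inert $X$ can be straightened, up to finite index, into a subgroup that decomposes as a piece of $P$ together with a height-closed finite-height piece — this is exactly where the homogeneity forced by the infinite, uniform invariants of $K$ must be married to the scalar action on $P$, since an element of $X$ near the socle may have both a finite-height and a height-$\w$ component that need to be disentangled by an automorphism without enlarging $\hat\phi(X)$ beyond a finite set. A secondary technical point, flagged above, is confirming that the scalar action on $P$ really extends from $\Aut(H)$ to all of $\Aut(G)$; this is what guarantees that the candidate $C$ assembled from $X\cap P$ is legitimately characteristic in $G$.
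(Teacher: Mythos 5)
Your overall strategy --- make the finite‑height part of $G$ as homogeneous as possible, observe that every subgroup of $P=p^{\omega}G$ is characteristic, and compare socle layers via Lemma~\ref{quotient} --- points in the same general direction as the paper, but the proposal has three genuine gaps, and the first two are fatal as written.

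First, the choice of $K$. You take $K$ to be a direct sum of cyclic groups with all Ulm invariants equal to $\lambda=|H|+\aleph_0$. The paper instead takes $K=\overline{B}_1\oplus B_2$, where $\overline{B}_1$ is the \emph{torsion completion} of a direct sum of cyclics with all invariants equal to a cardinal $\kappa>|H|^{\aleph_0}$ satisfying $\kappa^{\aleph_0}=\kappa$, and $B_2$ has all invariants $\aleph_1$. These features are not decoration: the entire proof runs on ``shear'' automorphisms $\phi_\gamma(v+w)=v+\gamma(v)+w$ attached to decompositions $G=V\oplus W$ and homomorphisms $\gamma\colon V\to W$, and to build $\gamma$ one repeatedly defines it on a basic subgroup of $V_\omega$ and extends to all of $V_\omega$ using that $W\cong\overline{B}_1$ is pure‑injective for torsion groups; a direct sum of cyclics is not pure‑injective, so your $K$ does not support this step. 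Likewise, the case analysis (the paper's Claim on the three possibilities for $X$) rests on cardinality counting --- e.g.\ if the relevant Ulm invariants of $X[p]$ were all finite then $X$ would have cardinality at most $2^{\aleph_0}<\kappa$ --- which requires $\kappa$ strictly larger than $|H|^{\aleph_0}$, not merely $|H|+\aleph_0$.

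Second, the central claim is asserted rather than proved. You write that characteristic inertia ``forces the image of $X[p]$ in $(p^kG)[p]/(p^{k+1}G)[p]$ to differ only by a finite group from a union of $\Aut(G)$-orbits'' because each single $\hat\phi(X)$ is finite and Lemma~\ref{bound} ``controls the spread.'' Lemma~\ref{bound} only bounds $\hat\gamma(X)$ for a \emph{finite} sum of maps; it gives no control over the union of the discrepancies produced by the infinitely many automorphisms needed to sweep out a height class, and indeed the finiteness of each individual $\hat\phi(X)$ is consistent with $X[p]$ failing badly to be height‑closed (this is exactly what happens in Example~\ref{noone}). The actual content of the theorem is the contrapositive: if the socle layers of $X$ are \emph{not} eventually aligned with those of some $G(\overline\alpha)$, then one can manufacture a \emph{single} shear automorphism $\phi_\gamma$ with $\hat\phi_\gamma(X)$ infinite. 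Constructing that one automorphism (the paper's Claims on the cases, on $p^nX\subseteq p^{\alpha_n}G$, and on $(p^nX)[p]=S_{\alpha_n}$) is where all the work lies, and your outline does not contain it. Relatedly, your description of the $\Aut(G)$-orbit structure on $G[p]$ (height classes on the finite‑height part, punctured lines on $P$) is itself a nontrivial transitivity assertion for a non‑separable group, unproved and unnecessary for the paper's route.

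Third, the reassembly is incomplete. Knowing $(p^kX)[p]\sim(p^kC)[p]$ for all $k$ gives, via Lemma~\ref{quotient}(b), that $X[p^n]\sim C[p^n]$ for each $n$; but Lemma~\ref{quotient}(a) requires in addition that $p^kX\sim p^kC$ for some $k$ before you may conclude $X\sim C$. The paper closes this by proving that for large $N$ one actually has $p^NX=p^NA$ (purity of $p^NX$ in $p^NA$ together with equality of socles), a step absent from your sketch.
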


\begin{proof} If $H$ is bounded, then $H\oplus H$ is a direct sum of cyclic groups, and hence this square has minimal full inertia (cf. \cite{GS}, \cite{K}). Therefore, according to Theorem~\ref{mincharfull}, it also has minimal characteristic inertia, and thus we can just let $K=H$. So, without loss of generality, we may assume that $H$ is unbounded and, in particular, that it is infinite.

Let $\kappa>|H|^{\aleph_0}$ be some cardinal with $\kappa^{\aleph_0}=\kappa$. Next, let $B_1$ be a direct sum of cyclic groups all of whose Ulm invariants (at finite ordinals) are equal to $\kappa$; so the torsion completion, $\overline B_1$, will also have cardinality $\kappa$. Let $B_2$ be a direct sum of cyclic groups, all of whose (again, finite) Ulm invariants equal $\aleph_1$. We let $K=\overline B_1\oplus B_2$ and show that $G:=H\oplus \overline B_1\oplus B_2=H\oplus K$ has the required properties.

Define an ordered set $\mathcal O$ as follows: $\mathcal O$ is the union of $\omega=\{0,1,2,\dots\}$, and
$\mathcal S$, the set of subgroups of $M:=p^\omega H$. We identify  the symbol $\infty$ with the zero subgroup. The elements of  $\omega\subseteq \mathcal O$ will (naturally) be called integers and  the others will (again, naturally) be called subgroups. We define $\alpha<\beta$ as follows: if $\alpha$ and $\beta$ are integers and this is true in the usual sense; if $\alpha$ is an integer and $\beta$ is a subgroup and; if $\beta=\infty$.  If $\alpha\in \mathcal S$, we agree that $\alpha+1=\infty$. If $E\in \mathcal S$, we define $p^E G=E$; if $E$ is $\infty=\{0\}$, this agrees with the usual definition. For each $\beta\in \mathcal O$ let  $S_\beta=(p^\beta G)[p]$. If $\beta\in \mathcal O$ and $X$ is a subgroup of $G$, let $X(\beta)=X\cap p^\beta G$ and $X_\beta= X/X(\beta)$; if $\beta$ is an integer or $\infty$, these agree with the usual definitions. Note that $X(M)$ agrees with the more usual $X(\omega)$ and $X_M$ with the more usual $X_\omega=X/X(\omega)$.

\medskip

We will show that any characteristically inert subgroup is commensurable with a characteristic subgroup of a particular form. If $\alpha_0<\alpha_1<\alpha_2<\cdots$ are in $\mathcal O$ and $\overline \alpha =(\alpha_0, \alpha_1, \dots)$, let $G(\overline \alpha)$ be the set of $x\in G$ such that for all $n<\omega$, $p^n x\in p^{\alpha_n} G$.  So, for example, if $E\in \mathcal O$ is a subgroup, then
$G(1, 3, E, \infty, \infty, \dots)$ is the collection of all $x$ such that $|x|\geq 1$, $|px|\geq 3$, $p^2 x\in E$ and $p^3 x=0$.

Since all $E\in \mathcal S$ are characteristic, it is relatively straightforward to verify that any subgroup of the form $G(\overline \alpha)$ will be characteristic, as well.

\medskip

Our arguments will be based on the following technical observation which will provide us with an abundance of automorphisms to use. It will play a role similar to that of Lemmas~2.15 and~2.16 in \cite K.

\begin{claim} \label{sheer}Suppose $X\subseteq G$ is a subgroup, $G=V\oplus W$ is a decomposition and $\gamma:V\to W$ is a homomorphism. Let $\phi_\gamma$ be the automorphism of $G$ given by $\phi_\gamma(v+w)=v+\gamma(v) +w$ for all $v\in V$, $w\in W$.

(a) If for all $j<\omega$, $v_j\in V\cap X$ with  $w_j:=\gamma(v_j)\in W\subseteq G$ and the elements $w_j+X\in G/X$ are distinct, then $\hat \phi_\gamma(X)$ is infinite, so that $X$ is not characteristically inert.

(b) If $\{n_j\}_{j<\omega}$ is a strictly increasing sequence of integers and for all $j<\omega$, $v_j\in V\cap X$ with  $w_j:=\gamma(p^{n_j}v_j)\in W[p]\setminus p^{n_j} X$, then $\hat \phi_\gamma(X)$ is infinite, so that $X$ is not characteristically inert.
\end{claim}

Clearly, $\phi_\gamma$ is an automorphism (its inverse is $\phi_{-\gamma}$.)

Regarding (a), consider the elements of $\hat \phi_\gamma(X)$ of the form
$$
               \phi_\gamma(v_j)+X =v_j+w_j+X= w_j+X.
$$
Since we are assuming these are distinct, the result follows.

For (b), suppose $j>k$ and $\phi_\gamma(v_{n_j})$ and $\phi_\gamma(v_{n_k})$ represent the same element of $\hat \phi_\gamma(X)$.  So  $\phi_\gamma(v_{n_j})=v_{n_j}+\gamma(v_{n_j})$ and $\phi_\gamma(v_{n_k})=v_{n_k}+\gamma(v_{n_k})$ are congruent modulo $X$. That is, $\gamma(v_{n_j})\in W$ and $\gamma(v_{n_k})\in W$ are congruent modulo $X$. Therefore,
$$
             w_j=w_j-p^{n_j-n_k} w_k= p^{n_j}(\gamma(v_{n_j})-\gamma(v_{n_k}))\in p^{n_j} X
$$
contrary to hypothesis.

\vskip .2in

Throughout, we will let $X$ be some characteristically inert subgroup of $G$; our goal is to construct a sequence $\overline \alpha$ such that $X\sim G(\overline \alpha)$.

\begin{claim}\label{cases} One of three things happens:

\medskip

(a)  $X$ is finite. In this case, $X\sim 0$ and we let $\alpha_X=\infty\in \mathcal O$.

\medskip

(b) $X$ is infinite and $X_\omega$ is finite.  In this case,  $X\sim X(\omega)\in \mathcal S\subseteq \mathcal O$ and we let $\alpha_X=X(\omega)$.

\medskip

(c) $X_\omega$ is infinite.  In this case there is an integer $\alpha_X\in \mathcal O$ such that $S_{\alpha_X}\sim X(\alpha_X)[p]$ and  $X\sim X(\alpha_X)$, i.e., $S_{\alpha_X}/X(\alpha_X)[p]$ and $X_{\alpha_X}$ are finite. (c.f., \cite K, Lemma~2.7)
\end{claim}

Clearly, exactly one of the three conditions holds, and the conclusions in (a) and (b) are routine. So assume  $X_\omega$ is infinite. Note that $|X(\omega)|\leq |H|<\kappa$. We first show that the cardinality of $|X_\omega|=|X|=\kappa$. Otherwise, there is a decomposition $\overline B_1=\overline B_3\oplus W$, where $\overline B_3$ and $W$ are copies of $\overline B_1$, and in the decomposition $G=H\oplus \overline B_3\oplus W\oplus B_2$, we have
$
                   X\subseteq V:=H\oplus \overline B_3\oplus 0\oplus B_2.
$

If $\overline V_\omega$ is the torsion-completion of $V_\omega$, then there is an isomorphism $\overline V_\omega\to W$. Let $\gamma$ be the composition of the natural map $V\to V_\omega\to \overline V_\omega$ with this isomorphism. If we choose any sequence of element $\{v_j\}_{j<\omega}$ of $X$ that are pairwise not congruent modulo $X(\omega)$, then the elements $w_j=\gamma(v_j)$ are distinct elements of $W$. So if $j<k$, then $w_j-w_k$ is a non-zero element of $W$, so in particular, it is not in $X\subseteq V$.  Since $X$ is characteristically inert, this contradicts Claim~\ref{sheer}(a), showing that $X_\omega$ must have cardinality $\kappa$, as claimed.

\medskip

There is a valuated decomposition $X[p]\cong X_h\oplus X(\omega)$. Since $|X[p]|=\kappa$ and $|X(\omega)|<\kappa$, $X_h$ must have cardinality $\kappa$.  Now, $X_h$ is a separable valuated vector space, so it has a basic subspace. If each of its (finite) Ulm invariants were finite, then $X_h$ (which is contained in a completion of this basic subspace) would also have cardinality at most $2^{\aleph_0}<\kappa$ contrary to what we just established.  Choose the integer $\alpha_X$ minimal with the property that the $\alpha_X$th Ulm  invariant of $X_p$ is infinite. For simplicity, we will just denote $\alpha_X$ by $\alpha$; we need to show $S_\alpha/ X(\alpha)[p]$ and $X/X(\alpha)$ are finite.

\medskip

Suppose that $S_\alpha/ X(\alpha)[p]$  is infinite. Since the $\alpha$th Ulm invariant of $X_h$ is infinite, we can construct a  decomposition $G=V\oplus W$,  where $V\cong \oplus_{j<\omega}\langle v_j'\rangle\cong \oplus_{j<\omega}\Z_{p^{\alpha+1}}$ such that $V[p]\subseteq X$. Since $S_\alpha/ X(\alpha)[p]$  is infinite, we can also find elements $\{w_j\}_{j<\omega}$ of $ W(\alpha)[p]$ that are pairwise not congruent modulo $X(\alpha)[p]$, and hence not congruent modulo $X$. If $v_j=p^\alpha v_j'\in V[p]\subseteq X$, there is clearly a homomorphism $\gamma:V\to W$ such that $\gamma(v_j)=w_j$ for $j<\omega$. So by Claim~\ref{sheer}(a) we can conclude that $X$ is not characteristically inert, contrary to hypothesis.

\medskip We now need to show that $X/X(\alpha)$ is finite, so assume it is not. Let $\beta< \alpha$ have the property that $X(\beta)/X(\beta+1)$ is infinite; such a value must clearly exist. Let $\{v_j\}_{j<\omega}\subseteq X(\beta)$  be linearly independent modulo $X(\beta+1)$.

There is clearly a decomposition $G=V\oplus W$ where  $ \{v_j\}_{j<\omega}\subseteq  V$; and the $\beta$th Ulm invariant of $W$ is infinite.

Note that $V_{\beta+1}=V/p^{\beta+1}V$ is $p^{\beta+1}$ bounded. This easily implies that there is a decomposition $V_{\beta+1}=Y\oplus Y'$ for which $Y[p]=\langle v_j+p^{\beta+1}V:j\in \omega\rangle$.
Since $\alpha$ is the first infinite Ulm invariant of $X_h$, we can conclude that
the image of $X(\beta)[p]/X(\beta+1)[p]\to S_\beta/S_{\beta+1}$ is finite.   On the other hand, the image of $p^\beta W[p]/p^{\beta+1}W[p]\to S_\beta/S_{\beta+1}$ is infinite.
Therefore, there are elements $w_j\in (p^\beta W)[p]$ that are pairwise not congruent modulo $X(\beta)[p]$, and hence pairwise not congruent modulo $X$.

Clearly, $v_j+ p^{\beta+1}V\mapsto w_j$  can be extended first to a homomorphism $\lambda: Y\to W$ and then, by setting $\lambda(Y')=0$, to a homomorphism $\lambda: V_{\beta+1}\to W$.
Let $\gamma$ be the composition
$
                V\to V_{\beta+1} \to  W.
$
As before, the elements $ \gamma(v_j)=\lambda (v_j+p^{\beta+1} V)= w_j\in \lambda (Y)$ are not congruent modulo $X$.  Since $X$ is characteristically inert, this contradicts Claim~\ref{sheer}(a), completing the proof of Claim~\ref{cases}.

\vskip .2in

We now define an entire sequence $\overline \alpha$ for $X$ (c.f., \cite K, Theorem~2.11).
For each $n<\omega$, $p^n X$ will also be characteristically inert in $G$ and we let $\alpha_n=\alpha_{p^n X}$ (in fact, this is just a simplification of notation).   We first verify that $\alpha_n+1\leq \alpha_{n+1}$. If $\alpha_n=\infty$, then $p^n X$ is finite, which implies that $p^{n+1} X$ is finite, so that $\alpha_{n+1}=\infty$, as required. Similarly, if $\alpha_n$ is a subgroup, then $(p^nX)_\omega$ is finite. Since $(p^nX)(\omega)\subseteq (p^n X)[p]$ this implies that $p^{n+1} X\cong (p^n X)/(p^n X)[p]$ is also finite, Therefore, again, $\alpha_{n+1}=\infty$, as required.

Finally, suppose $\alpha_n$ is an integer. If $\alpha_{n+1}$ is a subgroup, then the result is trivial.  So assume $\alpha_n$, $\alpha_{n+1}$ are integers.  Since $p^n X/p^n X(\alpha_n)$ is finite and multiplication by $p$ gives a surjection onto  $p^{n+1} X/p^{n+1} X(\alpha_n+1)$, the latter quotient is also finite.  And since $\alpha_{n+1}$ is the largest integer such that $p^{n+1} X/p^{n+1} X(\alpha_{n+1})$ is finite, we must have $\alpha_n+1\leq \alpha_{n+1}$.

\begin{claim}\label{zeroclaim} (cf. \cite K, Theorem~2.11(4))
For all $n<\omega$:
$$X\sim X_n:=X\cap G(\alpha_0, \alpha_1, \alpha_2, \dots, \alpha_{n-1}, \alpha_{n-1}+1, \alpha_{n-1}+2, \dots).$$
\end{claim}

If $n=1$, this just says $X\sim X(\alpha_X)$, which we know is true. We now show that for $n\geq 1$ we have $X_{n}\sim X_{n+1}$, which will complete the argument. Consider the homomorphism $\nu$
given by the composition
$$
          X_{n}  \,\smash{\mathop{\longrightarrow}\limits^{\times p^{n}}}\,   p^{n} X_{n}\subseteq p^{n} X \to (p^{n} X)/(p^{n} X)(\alpha_{p^{n} X}).
$$
It readily follows that $X_{n+1}$  is the kernel of $\nu$. And since we know this last group is finite, we have $X_{n}\sim X_{n+1}$, as desired.

\vskip .2in
Let $A=G(\overline \alpha)$. Our goal is to show $X\sim A$.

\begin{claim} \label{firstclaim} (cf. \cite K, Theorem~2.11(5)) If $n<\omega$, then $X[p^n]\sim A[p^n]$. \end{claim}

We know that $X_n[p^n]\subseteq A[p^n]$. For all $0\leq k<n$ we have $p^kX_n[p]\sim p^k X[p]\sim S_{\alpha_n}= p^k A[p]=p^k (A[p^n])[p]$. So by Lemma~\ref{quotient}(b), $X_n[p^n]\sim A[p^n]$, which implies that $X[p^n]\sim A[p^n]$.

\vskip .2in

Note that if $\alpha_n=\infty$ for some $n<\omega$, then it follows that $X\sim X_{n+1}=X_{n+1}[p^{n}]\sim A[p^n]=A$ and the result follows.

So, from here on, we may assume $\alpha_n$ is an integer for all $n$.

\begin{claim}\label{secondclaim} There is an $N<\omega$ such that for every $n\geq N$ we have $p^n X\subseteq p^{\alpha_n}G$:
\end{claim}

We suppose this fails and derive a contradiction. So, there must exist a strictly ascending sequence $n_0<n_1<n_2<\cdots$ and elements $v_{n_j}\in X$ such that $|p^{n_j} v_{n_j}|<\alpha_{n_j}$.

Find a decomposition $\overline B_1=\overline B_3\oplus W$ such that for all $j<\omega$ we have  $v_{n_j}\in V:=H \oplus \overline B_3\oplus 0 \oplus B_2$ and each Ulm factor of $W$ is infinite.

For $j<\omega$, let $v_{n_j}'=v_{n_j}+p^\omega V\in V_\omega$; so each $p^{n_j}v'_{n_j}\ne 0$. After possibly restricting to a subsequence, we may assume that for all $j<\omega$, if $n_j\leq k\in \omega$ and $p^k v'_{n_j}\ne 0$, then $| p^k v_{n_j}'|<|p^{n_{j+1}}v_{n_{j+1}}'|$. Using this condition, it can be checked that there is a basic subgroup of $V_\omega$ the form $C=\oplus_{j<\omega} C_j$ such that for each $j<\omega$ we have $p^{n_j}v_{n_j}'\in C_j$.

For each $j<\omega$, if $\b=|p^{n_j}v'_{n_j}|<\alpha_{n_j}$, then the image of
$$p^{n_j} X(\beta)[p]/p^{n_j} X(\beta+1)[p]\to S_\beta/S_{\beta+1}$$ is finite.   On the other hand, the image of
$$p^\beta W[p]/p^{\beta+1}W[p]\to S_\beta/S_{\beta+1}$$ is infinite. So we can find a $w_j\in W[p]\setminus p^{n_j} X$ with $|w_j|=\beta=|p^{n_j}v'_{n_j}|$. The assignment $p^{n_j}v'_{n_j}\mapsto w_j$ clearly extends to a homomorphism $C_j\to W$. Summing extends these all to a homomorphism $C\to W$. And since $C$ is pure in $V_\omega$ and $W$ is pure-injective for torsion groups, this extends to a homomorphism $V_\omega \to W$. We let $\gamma:V\to W$ be the composition $V\to V_\omega \to W$, so that $\gamma(p^{n_j}v_{n_j})=w_{n_j}$.

Since $X$ is characteristically inert, this contradict Claim~\ref{sheer}(b) and establishes the claim.

\vskip .2in

It follows from this that $X\sim X_N\subseteq A$.  So, replacing $X$ by $X_N$ there is no loss of generality in assuming that $X\subseteq A$.

\begin{claim}\label{thirdclaim} There is an $N<\omega$ such that for all $n\geq N$ we have $(p^n X)[p]=S_{\alpha_n}=(p^n A)[p]$.
\end{claim}

If the claim fails, then we can find a strictly increasing sequence of elements of $\omega$, $n_0<n_1<n_2<\cdots$ and elements $w_{n_j}\in (p^{n_j} A)[p]\setminus (p^{{n_j}} X)$. There is clearly a decomposition $B_2=B_4\oplus V$ such that every Ulm factor of $V$ is infinite and for each $j<\omega$ we have $w_{n_j}\in H\oplus \overline B_1 \oplus B_4\oplus 0:=W$.

Since by Claim~\ref{firstclaim}, $X[p^{n_j+1}]\sim A[p^{n_j+1}]$, we can conclude that
$$(X\cap V)[p^{n_j+1}]\sim V(\overline \alpha)[p^{n_j+1}].$$
Consequently,
$$
                                p^{n_j}((X\cap V)[p^{n_j+1}])\sim p^{n_j}( V(\overline \alpha)[p^{n_n+1}])= (p^{\alpha_{n_j}}V)[p].
$$
Therefore, we can find $v_j\in X\cap V$ such that $p^{n_j} v_j\in V[p]$ and $|p^{n_j} v_j|=\alpha_{n_j}$.

Again, $|p^{n_j} v_j|$ goes to infinity as $j$ does. So if we start with a decomposition of $V$ into cyclic summands, then after possibly restricting to a subsequence, we may assume that the supports of the $p^{n_j} v_j$ in this decomposition are disjoint. Therefore, we can find a decomposition of $V$ into $\oplus_{j<\omega} V_j$ such that $p^{n_j} v_j\in V_j$.

As before, the assignment $p^{n_j}v_{n_j}\mapsto w_j$ clearly extends to a homomorphism $V_j\to W$. Summing extends these all to a homomorphism $\gamma: V\to W$ with $\gamma(p^{n_j}v_{n_j})=w_{n_j}$. Since $X$ is characteristically inert, this contradicts Claim~\ref{sheer}(b), proving the claim.

\vskip .2in

One sees that Claim~\ref{thirdclaim} implies that $p^N X$ is pure in $p^N A$, and since $(p^N X)[p]=S_{\alpha_N}=(p^N A)[p]$ this means that $p^N X=p^N A$. And since $X\subseteq A$, and by Claim~\ref{firstclaim} $X[p^N]\sim A[p^N]$,  by Lemma~\ref{quotient}(a) we can conclude that $X\sim A$, as required.
\end{proof}

If $H$ is separable, then it clearly satisfies the hypotheses of Theorem~\ref{big}. And in that proof, any subgroup that is characteristically inert (and hence also any subgroup that is fully inert) is commensurable with the fully invariant subgroup $G(\overline \alpha)$. We, therefore, have the following consequence.

\begin{corollary}\label{summand} Any separable group is a summand of a separable group with minimal characteristic inertia as well as with minimal full inertia.
\end{corollary}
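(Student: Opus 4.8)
The plan is to apply Theorem~\ref{big} directly, taking the given separable group as the group $H$ in that theorem, and then to upgrade the conclusion from minimal characteristic inertia to minimal full inertia by inspecting the precise form of the commensurable subgroups that its proof produces. First I would observe that if $A$ is the given separable group, then $p^\omega A=0$, so certainly $p^{\omega+1}A=0$, and the hypothesis that every automorphism of $A$ restricts to multiplication by a nonzero element of $\Z_p$ on $p^\omega A$ holds vacuously. Hence $A$ satisfies the hypotheses of Theorem~\ref{big}, which yields a separable group $K$ for which $G:=A\oplus K$ has minimal characteristic inertia. Since both $A$ and $K$ are separable, so is $G=A\oplus K$, and $A$ is visibly a summand of $G$; this already settles the minimal-characteristic-inertia half of the assertion.

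The remaining work is to show that the same $G$ also has minimal full inertia, and here the key point is that in the separable case the ordered index set $\mathcal O$ built in the proof of Theorem~\ref{big} collapses. Because $M=p^\omega H=p^\omega A=0$, the only subgroup of $M$ is $\infty=\{0\}$, so $\mathcal O=\omega\cup\{\infty\}$ and every admissible sequence $\overline\alpha$ consists of ordinary integer heights (possibly terminating in $\infty$). Consequently each subgroup $G(\overline\alpha)$ is cut out purely by a height condition: $x\in G(\overline\alpha)$ precisely when $|p^nx|\ge\alpha_n$ for all $n$ (interpreting $|p^nx|\ge\infty$ as $p^nx=0$). Since any endomorphism $\gamma$ of $G$ satisfies $|\gamma(y)|\ge|y|$, we get $|p^n\gamma(x)|=|\gamma(p^nx)|\ge|p^nx|\ge\alpha_n$ for every $n$, so $\gamma(x)\in G(\overline\alpha)$; thus $G(\overline\alpha)$ is fully invariant, not merely characteristic.

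Finally I would combine these observations. Any fully inert subgroup $X$ of $G$ is in particular characteristically inert, so the construction carried out in the proof of Theorem~\ref{big} produces a sequence $\overline\alpha$ with $X\sim G(\overline\alpha)$, and by the previous paragraph this $G(\overline\alpha)$ is fully invariant. Therefore every fully inert subgroup is commensurable with a fully invariant subgroup, so $G$ has minimal full inertia as well, completing the argument. The only content beyond quoting Theorem~\ref{big} is the claim that $G(\overline\alpha)$ is fully invariant when $H$ is separable, and I expect this to be the single step needing genuine (though very short) verification: it is exactly the point where the collapse of $\mathcal O$ to a purely height-theoretic index set, together with the monotonicity of height under endomorphisms, turns the characteristic subgroups of the general theorem into fully invariant ones.
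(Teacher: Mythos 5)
Your argument is correct and is essentially the paper's own: the authors likewise take $H$ to be the given separable group (so the hypotheses of Theorem~\ref{big} hold vacuously with $p^\omega H=0$) and observe that the commensurable subgroup $G(\overline\alpha)$ produced in that proof is then fully invariant, since $\mathcal S$ collapses and the defining conditions are purely height conditions preserved by all endomorphisms. Your write-up just makes explicit the collapse of $\mathcal O$ to $\omega\cup\{\infty\}$, which the paper leaves implicit.
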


Observe that the last result holds for the prime $2$, even though a separable 2-group may have characteristic subgroups that fail to be fully invariant.

\medskip

If $p$ is odd, we already showed in Corollary~\ref{sep}, listed above, that any separable group that has minimal characteristic inertia also has minimal full inertia. However, the subsequent Example~\ref{minchar} will demonstrate in the sequel that this does not extend to non-separable groups.

\section{Examples and Problems}

The assertions quoted above allow us to extract an example of a group that has neither minimal characteristic inertia nor minimal full inertia.

\begin{example}\label{noone} For any prime $p$ there exists a $p$-group that has {\it neither} minimal full inertia {\it nor} minimal characteristic inertia.
\end{example}

\begin{proof} Let $B=\oplus_{n<\omega} \mathbb{Z}_{p^{n+1}}$ be the standard direct sum of cyclic groups and let $\overline{B}$ be its torsion completion. In \cite {GS} it was noted that $B\oplus \overline B$ does not have minimal full inertia. In that proof, it was shown that $X = 0 \oplus \overline{B}[p]$ is fully inert but not commensurable with a fully invariant subgroup.

We claim that $B\oplus \overline B$ does not have minimal characteristic inertia, as well. First, since $X$ is fully inert, it follows that it is characteristically inert. So it will suffice to verify that $X$ is not commensurable with a characteristic subgroup.

Let $k<\omega$ be arbitrary. Clearly, there is a decomposition
$$B=B_1\oplus B_2:= \left(\oplus_{n<k} \Z_{p^{n+1}}\right)\oplus \left(\oplus_{k\leq n} \Z_{p^{n+1}}\right)
$$
Define $\phi_k$, an automorphism of $G$, using the decomposition
$$G=(B_1\oplus \overline B_2)\oplus (B_1\oplus \overline B_2)$$
as follows. If $x_1,x_1'\in B_1$ and $x_2,x_2'\in \overline B_2$, let
$$
        \phi_k((x_1,x_2,x_1',x_2'))=(x'_1,x_2,x_1,x_2').
$$
It is now easy to verify that $\hat \phi_k$ is isomorphic to the socle $B_1[p]$, which has order $p^k$.

Letting $k$ vary, we can infer that $X$ is not {\it uniformly} characteristically inert (where that term is defined as in the case of {\it uniformly} fully inert by using automorphisms instead of endomorphisms -- see, for instance, \cite{GS}). Consequently, $X$ is not commensurable with a characteristic subgroup, as stated.
\end{proof}

We observed in Proposition~\ref{minfull}, that if the group $G$ has the property that all of its characteristic subgroups are fully invariant, then if $G$ has minimal characteristic inertia, then it must have minimal full inertia. So, if $G$ is a group with minimal characteristic inertia, but does not have minimal full inertia, then it must have characteristic subgroups that are not fully invariant. The following shows that this can happen.

\begin{example}\label{minchar} There exists a group with minimal characteristic inertia that fails to have minimal full inertia. In fact, for any prime $p$ (even for $p=2$), there is a group of length $\omega+1$ with these properties.
\end{example}

\begin{proof} Let $R$ be the polynomial ring $\Z_p[z]$. Next, let $M$ be the $R$-module $\Z_p[z,z^{-1}]=\{z^kr(z): k\in \mathbb Z, r(z)\in R\}$ (so $M$ is a submodule of the quotient ring of $R$). By a classical result of Corner (\cite C), there is a group $H$ such that $p^\omega H=M$ and the endomorphism ring on $H$ restricts to $R$ on $M$ in such a way that the automorphism group of $H$ restricts to the units of $R$, i.e., the non-zero elements of $\Z_p$.

Let $G=H\oplus K$ be as in Theorem~\ref{big}, so that $G$ has minimal characteristic inertia.

We, therefore, need to show that $G$ does not have minimal full inertia. Let
$$
                 E=\langle 1, z^{-1}, z^{-2}, z^{-3}, \dots\rangle\subseteq M=p^\omega G.
$$
Any endomorphism of $G$ restricted to $M$ is simply multiplication by some polynomial
$$
           r(x)= a_0+a_1 z +\cdots +a_k z^k.
$$
If for $i=0,\dots, k$, $\phi_i$ is the endomorphism given by multiplication by $z^i$, then it easily follows that $\hat\phi_i(E)$ is finite (in fact, it is naturally isomorphic to the $\Z_p$ span of $z, z^2, \dots, z^i$, so that it has dimension $i$). It follows from Lemma~\ref{bound} that $\hat r(E)$ is also finite. Thus, $E$ has full inertia. On the other hand, since $\hat \phi_k(E)\cong \Z_p^k$ for each $k$, $E$ does not have {\it uniform} full inertia as defined in \cite{GS}, so it is not commensurable with a fully invariant subgroup. Therefore, $G$ does not have minimal full inertia, as asserted.
\end{proof}

By combining Corollary~\ref{summand} and Example~\ref{minchar}, we observe that the class of groups with minimal characteristic inertia is {\it not} closed under taking direct summands.

In Example~\ref{noone} it was noted that $B\oplus \overline B$ has neither minimal full inertia nor minimal characteristic inertia. This immediately implies that neither class is closed under taking direct sums. On the other hand, in Theorem~\ref{mincharfull} it was observed that the group $A$ has minimal full inertia if, and only if, its ``square", $G=A\oplus A$, has this property. The last result immediately implies that this does not hold for the property of having minimal characteristic inertia and thus we arrive at the following consequence.

\begin{corollary}
There is a group $A$ with minimal characteristic inertia, but the square $G=A\oplus A$ does not have minimal characteristic inertia.
\end{corollary}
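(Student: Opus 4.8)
The plan is to take for $A$ the very group constructed in Example~\ref{minchar}, namely a $p$-group of length $\omega+1$ that has minimal characteristic inertia yet fails to have minimal full inertia. The whole argument will then consist of feeding this $A$ into the equivalence already established in Theorem~\ref{mincharfull}.

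First I would recall that Theorem~\ref{mincharfull}, applied to the square $G=A\oplus A$, asserts the equivalence of condition (a), that $A$ has minimal full inertia, and condition (c), that $G$ has minimal characteristic inertia. This equivalence is available for an arbitrary $A$: by Lemma~\ref{sum} every endomorphism of $A\oplus A$ is a sum of (at most four) automorphisms, which is exactly the hypothesis that powers Corollary~\ref{frog} and hence the (b)$\Leftrightarrow$(c) portion of the theorem. I would then take the contrapositive of (a)$\Leftrightarrow$(c): since the chosen $A$ does \emph{not} have minimal full inertia, condition (a) fails, and therefore condition (c) must fail as well, so $G=A\oplus A$ does not have minimal characteristic inertia.

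Meanwhile $A$ itself does have minimal characteristic inertia, again by Example~\ref{minchar}, so this single group witnesses the claim. I do not expect any real obstacle here, since the statement is an immediate logical consequence of combining Example~\ref{minchar} with the (a)$\Leftrightarrow$(c) half of Theorem~\ref{mincharfull}. The only point that genuinely requires care is confirming that the theorem's equivalence applies to this specific $A$---but that is guaranteed by Lemma~\ref{sum}, which holds for every group regarded as a square.
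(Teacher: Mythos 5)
Your proposal is correct and is essentially identical to the paper's argument: the paper's proof simply says ``Let $A$ be any group with minimal characteristic inertia, but not minimal full inertia,'' relying on Example~\ref{minchar} for existence and on the (a)$\Leftrightarrow$(c) equivalence of Theorem~\ref{mincharfull} to conclude that $A\oplus A$ fails to have minimal characteristic inertia. You have merely made explicit the logical steps the paper leaves implicit.
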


\begin{proof} Let $A$ be any group with minimal characteristic inertia, but not minimal full inertia. \end{proof}

As already noticed above, many examples of groups without minimal full inertia have been given, i.e., groups $G$ with a fully inert subgroup $X$ that is not commensurable with a fully invariant subgroup (for more details, we refer to \cite{GS}). The following, however, gives an example where $X$ is actually characteristic.

\begin{example}
There is a $2$-group $G$ that has a characteristic subgroup $X$ which is fully inert, but {\it not} commensurable with a fully invariant subgroup.
\end{example}

\begin{proof}
Suppose $U=\mathbb Z_2\oplus \mathbb Z_8$ and $S$ is the subgroup $\langle (1,2)\rangle =\{(0,0)$, $(1,2)$, $(0,4)$, $(1,6)\}$; actually, Kaplansky noted in \cite{Kap} that $S$ is characteristic in $U$, but not fully invariant.

We now let $U^\mathbb N=\oplus_{i\in \mathbb N}U_i$, where each $U_i$ is an isomorphic copy of $U$. Consider the collection $R$ of endomorphisms of $U^\mathbb N$ of the form $\phi:=(\phi_i)_{i\in \mathbb N}$, where $\phi_i:U_i\to U_i$ and such that, for some $N\in \mathbb N$ and $m\in \mathbb Z$, the map $\phi$ restricted to $\oplus_{i\geq N}U_i\subseteq U^\mathbb N$ is simply multiplication by $m$.

Using again the classical result of Corner (\cite{C}, Theorem~10.2), we can construct a group $G$ with $2^\omega G=U^{\mathbb N}$ such that the endomorphisms of $G$, when restricted to $U^{\mathbb N}$, are precisely the ring $R$ and whose automorphisms, again when restricted to $U^{\mathbb N}$, are precisely the units of $R$.

Let $X:=S^\mathbb N=\oplus_{i\in \mathbb N} S_i$, where $S_i$ in $U_i$ is simply $S$ in this copy of $U$.

Since $S$ is characteristic in $U$ and all of the endomorphism in $R$ respect the given coordinate structure, it immediately follows that $X$ is actually characteristic in $G$.

We now observe that $X$ is fully inert. If $\phi\in R$ and we restrict $\phi$ to $U^{\mathbb N}$ (using the same letter), we can by construction find an $N\in \mathbb N$ and $m\in \mathbb Z$ such that $\phi$ restricted to $\oplus_{i\geq N}U_i\subseteq U^\mathbb N$ is simply multiplication by $m$. It follows that $\phi(\oplus_{i\geq N}S_i)\subseteq X$.
Therefore,
$$
          \hat \phi(X)=[\phi(\oplus_{i< N}S_i)+X]/X.
$$
Since $\phi(\oplus_{i< N}S_i)$ is finite, it readily follows that $\hat\phi(X)$ is, as well, so that $X$ is fully inert.

On the other hand, consider the idempotent homomorphism $\kappa: U\to U$ given by $\kappa((a,b))=(a,0)$. So, $(1,2)\in S$, but $\kappa ((1,2))=(1,0)\not\in S$. For each $n\in \mathbb N$, consider the endomorphism $\phi_n$ of $G$ that restricts to $\kappa$ on each $U_i$ for $i\leq n$, and is 0 on each $U_i$ for $i>N$. It is easy to see that $|\hat \phi_n(X)|=2^n$ for all $n\in \mathbb N$. Consequently, $X$ is not {\it uniformly} fully inert (see \cite{GS}). But this means that it is not commensurable with a fully invariant subgroup, either.
\end{proof}

In contrast to the statement of the previous example, we may now a little refine it in order to exhibit a $2$-group having a characteristic subgroup that is not fully inert, thus somewhat extending the ingenious example from \cite{C}.

\begin{example} There is a $2$-group $G$ having a characteristic subgroup which is {\it not} a fully inert subgroup.
\end{example}

\begin{proof} Retain the group $U$ and its characteristic subgroup $S$ from the last example. If $R$ is the endomorphism ring of $U$, then associating each $\phi\in R$ with the endomorphism $U^{\mathbb N}\to U^{\mathbb N}$ given by applying $\phi$ to each coordinate individually, we can view $R$ as acting on  $U^{\mathbb N}$.

We again use Corner's Theorem to produce a group $G$ with $2^\omega G=U^{\mathbb N}$ as above. Again, the same $X$ as defined in our previous example is easily seen to be characteristic, but not fully inert.
\end{proof}

In \cite {C}, Corner produced an example of group that was transitive, but not fully transitive.  For $p\not= 2$, any transitive group is always fully transitive, so his example was necessarily a $2$-group. In our final result, we amend his construction to show there is such a group with the property that all of its characteristic subgroups are fully invariant.

\begin{example}\label{transnonfull}
There is a transitive, non-fully transitive $2$-group, all of whose characteristic subgroups are fully invariant.
\end{example}

\begin{proof}
Let $G$ be Corner's original construction of a transitive, but not fully transitive, group. It has the specific property that $2^\omega G$ is finite; in fact, it follows from our previous examples that $2^\omega G=\Z_2\oplus \Z_8=U$. In addition, if $R$ is the collection of endomorphisms of $G$ in Corner's example restricted to $2^\omega G$, then  $R$ is the subring of the endomorphisms of $2^\omega G$, generated by the collection of automorphism of $2^\omega G$, and any such automorphism of $2^\omega G$ extends to an automorphism of $G$.

Since the transitivity or full transitivity of $G$ depends only on how the endomorphism ring acts on $2^\omega G$, adding a separable summand to $G$ will always give another group with the same properties. Therefore, without loss of generality, we may assume that for every $n\in \N$, the $n$th Ulm invariant of $G$ is at least $2$.

\medskip

We need to show that an arbitrary characteristic subgroup $X$ is fully invariant. So, it suffices to show that, if $\gamma$ is any endomorphism of $G$ and $x$ is any element of $X$, then $\gamma(x)\in X$.

To that aim, suppose first that $2^k$ is the order of $x+2^\omega G\in G/2^\omega G=G_1$; so $|2^j x|<\omega$ if, and only if, $j<k$.

We know that $\gamma$ restricted to $2^\omega G$ is a sum of automorphisms of $2^\omega G$, say $\phi_1+\cdots+ \phi_m$, and that all of these maps can be extended to automorphisms of $G$. Since $2^k x\in 2^\omega G$, we have
$$
                        \gamma(2^kx)= \phi_1(2^k x)+\cdots+ \phi_m(2^k x).
$$
If $\gamma'=\gamma-(\phi_1+\cdots+ \phi_m)$, then since $X$ is characteristic, $\gamma(x)\in X$ if, and only if, $\gamma'(x)\in X$. Replacing $\gamma$ by $\gamma'$, there is no loss of generality in assuming that $\gamma(2^kx)=0$.

Let $z=\gamma(x)$; so $2^k z=0$ and we clearly have an inequality of height sequences $\Vert x\Vert\leq \Vert z\Vert$.  Set $y_j=2^j x$ and $y'_j=-2^j x$ for all $j\geq k$. By a technique that goes back to Theorem~26 of \cite{Kap} (which also appears in Theorem~2.13 of \cite {CDK} and was mentioned earlier in this work), we can induct backwards from $j=k$ down to $j=0$ to construct $y_{k-1}, \dots, y_1, y_0$ and $y'_{k-1}, \dots, y'_1, y'_0$ such that, for each $0\leq j$, we have:

\medskip

(1) $2^j z= y_j+y'_j$, i.e., $2^j z-y_j=y'_j$;

\medskip

(2) $2y_j=y_{j+1}$ and $2y_j'=y_{j+1}'$;

\medskip

(3) $|y_j|=|y'_j|=|2^j x|$.

\medskip

In fact, these conditions clearly hold for all $j\geq k$, so assume they hold down to $j+1$ and we need to construct $y_j$, which by (1) will define the wanted $y_j'$.

\medskip

{\it Case 1}: $|2^j x|+1 = |2^{j+1} x|=|y_{j+1}|$: Let $y_j$ be any element of $G$ satisfying $2y_j=y_{j+1}$ and $|y_j|=|2^j x|$. By (1), we must let $y_j'=2^j z-y_j$ and (2) follows easily.

Certainly, $|y_j'|\geq \min\{ |y_j|, |2^j z|\}=|2^j x|$. And if $|y_j'|> |2^j x|$, then
$$ |2^j x|+1=|y_{j+1}|=|y_{j+1}'|= |2y_j'|\geq |y_j'|+1>|2^j x|+1.$$
This contradiction verifies (3) for $j$.

\medskip

{\it Case 2}: $|2^j x|+1 < |2^{j+1} x|=|y_{j+1}|$: Let $n=|2^j x|$. We start by finding $s\in G$ such that $|s|\geq n+1$ and $2s=y_{j+1}$.

Recall that the $n$th Ulm factor of $G$ is isomorphic to
$$
            U_n:=\{ g\in 2^n G: 2g\in 2^{n+2}G\}/ 2^{n+1} G.
$$
Since $|2^j z|\geq |2^j x|= n$ and $|2^{j+1} z|\geq |2^{j+1} x|\geq n+2$, we see that $v:= 2^j z+2^{n+1} G$ represents a (possibly zero) element of $U_n$.

Since $U_n$ has at least $3$ elements, it has a non-zero element $w\ne v$. Using the usual way to think of Ulm factors, we can find $t\in (2^n G)[2]$ such that $t+2^{n+1} G= w$; since $w\ne 0$, we have $|t|=n$.

Let $y_j=s+t$. So, $ y_j+2^{n+1} G=t+2^{n+1}G=v$ and $2y_j =2s+2t=2s=y_{j+1}$. To make sure (1) continues to hold, we must define $y_j'=2^j z-y_j$, which will again imply that $2y_j'=y_{j+1}'$, so that (2) will hold as well.

Regarding (3), since $|s|\geq n+1$, we can conclude that $|y_j|=|s+t|=n=|2^j x|$, which is half the battle. In addition, since $|y_j|= n$ and $|2^j z|\geq |2^j x|=n$, we can deduce that $|y_j'|\geq n$. Finally, since
$$
             y'_j+2^{n+1} G= (2^j z+2^{n+1} G)- (y_j+2^{n+1} G)=v-w\in U_n
$$
is non-zero, we can conclude that $|y_j'|=n=|2^j x|$, as required.

\medskip

Setting $j=0$, we can infer from (1) that $z=y_0+y'_0$. And from (2) and (3), the height sequences must satisfy
$\Vert y_0\Vert =\Vert y_0' \Vert =\Vert x \Vert$. So, since $G$ is transitive, there are automorphisms $\alpha$ and $\alpha'$ such that $\alpha(x)=y_0$ and $\alpha'(x)=y_0'$. Therefore, since $X$ is characteristic, one finds that
$$
                     z=y_0+y_0'= \alpha(x)+\alpha'(x)\in X,
$$
as required.
\end{proof}

A query which immediately arises is of whether or not the Krylov transitive $2$-group as constructed in \cite{BGGS} to be neither transitive nor fully transitive has the same property as in the preceding example, that is, are all its characteristic subgroups fully invariant, or even commensurable with fully invariant subgroups? Moreover, it is rather logical to have true the assertion that, for all primes $p$, any Krylov transitive $p$-group whose characteristic subgroups are (commensurable with) fully invariant subgroups is necessarily transitive.

\medskip

We end our work with two problems of some interest. The following question is clearly important and possibly difficult (compare with Example~\ref{minchar}).

\medskip

\noindent{\bf Problem 1.} Does every group with minimal full inertia also have minimal characteristic inertia?

\medskip

In case this is not true, that is these two properties are independent each other, we proceed with the next possibly challenging question, which is relevant to Corollary~\ref{totproj}.

\medskip

\noindent{\bf Problem 2.} Do totally projective $2$-groups have the minimal characteristic inertia?

\medskip

It is worthwhile noticing that, in view of \cite{K}, they always have the minimal full inertia.

\medskip

\noindent {\bf Funding:} The work of the first-named author, P.V. Danchev, is partially supported by the Bulgarian National Science Fund under Grant KP-06 No. 32/1 of December 07, 2019.

\vskip2.0pc

\end{document}